\DeclareMathOperator{\im}{im}
\DeclareMathOperator{\tr}{tr}
\DeclareMathOperator{\dvol}{dV}
\DeclareMathOperator{\Ric}{Ric}
\DeclareMathOperator{\Rm}{Rm}
\newcommand{\bg}{\boldsymbol{g}}
\newcommand{\defn}[1]{{\boldmath\bfseries#1}}
\newcommand{\cg}{\widetilde{g}}
\newcommand{\cu}{\widetilde{u}}
\newcommand{\cv}{\widetilde{v}}
\newcommand{\cw}{\widetilde{w}}
\newcommand{\cx}{\widetilde{x}}
\newcommand{\cD}{\widetilde{D}}
\newcommand{\cI}{\widetilde{I}}
\newcommand{\cQ}{\widetilde{Q}}
\newcommand{\cX}{\widetilde{X}}
\newcommand{\cdelta}{\widetilde{\delta}}
\newcommand{\cDelta}{\widetilde{\Delta}}
\newcommand{\cmE}{\widetilde{\mathcal{E}}}
\newcommand{\cmG}{\widetilde{\mathcal{G}}}
\newcommand{\cmU}{\widetilde{\mathcal{U}}}
\newcommand{\hg}{\widehat{g}}
\newcommand{\mE}{\mathcal{E}}
\newcommand{\mG}{\mathcal{G}}
\newcommand{\kD}{\mathfrak{D}}
\newcommand{\kc}{\mathfrak{c}}
\newcommand{\bN}{\mathbb{N}}
\newcommand{\bR}{\mathbb{R}}
\newcommand{\bZ}{\mathbb{Z}}
\newcommand{\sF}{\mathscr{F}}
\newcommand{\sI}{\mathscr{I}}
\newcommand{\lv}{\lvert}
\newcommand{\rv}{\rvert}
\def\sideremark#1{\ifvmode\leavevmode\fi\vadjust{\vbox to0pt{\vss
 \hbox to 0pt{\hskip\hsize\hskip1em
 \vbox{\hsize3cm\tiny\raggedright\pretolerance10000
 \noindent #1\hfill}\hss}\vbox to8pt{\vfil}\vss}}}
\newcommand{\suchthat}{\mathrel{}:\mathrel{}}
\newtheorem{theorem}{Theorem}[section]
\newtheorem{proposition}[theorem]{Proposition}
\newtheorem{lemma}[theorem]{Lemma}
\theoremstyle{definition}
\theoremstyle{remark}
\newtheorem{remark}[theorem]{Remark}
\numberwithin{equation}{section}
\begin{document}

\title[Conformally covariant tridifferential operators]{Conformally covariant tridifferential operators via the Fefferman--Graham ambient space}
\author{Jeffrey S.\ Case}
\address{Department of Mathematics \\ Penn State University \\ University Park, PA 16802 \\ USA}
\email{jscase@psu.edu}
\author{Opal Cieslak}
\address{Department of Mathematics, Statistics, and Computer Science \\
851 S.\ Morgan Street \\ 322 Science and Engineering Offices (MC 249) \\ Chicago, IL 60607-7045 \\ USA}
\email{jcies4@uic.edu}
\keywords{conformally invariant operator, GJMS operator, polydifferential operator}
\subjclass[2020]{Primary 58J70; Secondary 53C18}
\begin{abstract}
    We construct a large family of conformally covariant tridifferential operators as tangential operators in the Fefferman--Graham ambient space.
    Our construction is analogous to the linear and bilinear constructions of Graham--Jenne--Mason--Sparling and Case--Lin--Yuan, respectively.
    We also show that the symmetrization of our ambient operators are formally self-adjoint when acting on densities of the correct weight.
\end{abstract}

\maketitle

\section{Introduction}
\label{sec:intro}

A landmark result of Graham, Jenne, Mason, and Sparling~\cite{GJMS1992} established the existence of conformally covariant, (linear) differential operators, now called GJMS operators, with leading-order term a power of the Laplacian.
Their construction is computationally simple:\ 
the GJMS operator $P_{2k}$ with leading-order term $\Delta^k$ descends from the Laplacian $\cDelta^k$ in the Fefferman--Graham ambient space, as follows from a commutator formula using an $\mathfrak{sl}_2$-triple.
Analogous constructions can also be made on sections of different bundles~\cites{BransonGover2005,Matsumoto2013}, on submanifolds of conformal manifolds~\cite{CaseGrahamKuo2023}, and on CR manifolds~\cite{GoverGraham2005}.

This paper is a contribution to the problem of systematically constructing and classifying \emph{nonlinear} conformally invariant operators.
There are two significant previous works in this direction.
First, Alexakis~\cite{Alexakis2003} classified such operators in odd dimensions via harmonic homogeneous extensions, modeled on the classification of scalar Riemannian invariants by Bailey, Eastwood, and Graham~\cite{BaileyEastwoodGraham1994}.
Second, Case, Lin, and Yuan~\cite{CaseLinYuan2018b} proved a correspondence between formally self-adjoint, conformally invariant, multilinear differential operators and scalar Riemannian invariants that are variational within conformal classes.
Among the difficulties in applying these results are that it is unclear how to use Alexakis' approach to produce formally self-adjoint operators, or how to use Case, Lin, and Yuan's approach to identify the principal symbol of the corresponding operators.
These issues are more readily addressed by direct adaptation of the original construction of the GJMS operators.

The first systematic adaptation of the GJMS construction to nonlinear operators was carried out by Case, Lin, and Yuan~\cite{CaseLinYuan2022or}, who produced a family of conformally invariant bidifferential operators;
i.e.\ operators acting on pairs of functions that restrict to linear operators when one input is held fixed.
These operators generalize conformally invariant operators on the round sphere constructed by Ovsienko and Redou~\cite{OvsienkoRedou2003} for generic weights and by Clerc~\cites{Clerc2016,Clerc2017} for all weights, and are called \emph{curved Ovsienko--Redou operators}.
Like their counterparts on the sphere, the curved Ovsienko--Redou operators are constructed as polynomials in the ambient Laplacian.
Case, Lin, and Yuan used their construction to produce conformally invariant differential operators of homogeneity $-2k$ and order at most $2k-4$;
these can be added to the GJMS operators to produce new conformally invariant operators with leading-order term $\Delta^k$, strongly exhibiting the nonuniqueness of the GJMS operators.

In this paper we give the second systematic adaptation of the GJMS construction to nonlinear operators.
Our first result is the existence of families of conformally covariant tridifferential operators:

\begin{theorem}
    \label{tridifferential}
    Fix integers $k \geq 0$ and $n \geq 3$;
    if $n$ is even, then assume additionally that $n \geq 2k$.
    Let $w_1,w_2,w_3 \in \bR$.
    There is a $(k+1)$-dimensional family of functions $A \in \sF_k^5$ such that
    \begin{equation}
        \label{eqn:tridifferential-operator}
        \cD(\cu \otimes \cv \otimes \cw) := \sum_{\alpha \in \sI_k^5} \binom{k}{\alpha} A_{\alpha}\cDelta^{\alpha_1} \left( \cDelta^{\alpha_2} \bigl( (\cDelta^{\alpha_3}\cu)(\cDelta^{\alpha_4}\cv) \bigr) \cdot \cDelta^{\alpha_5} \cw \right)
    \end{equation}
    is tangential on $\cmE[w_1] \otimes \cmE[w_2] \otimes \cmE[w_3]$, where $A_\alpha := A(\alpha)$ and
    \begin{equation*}
     \binom{k}{\alpha} := \frac{k!}{\alpha_1!\alpha_2!\alpha_3!\alpha_4!\alpha_5!} .
    \end{equation*}
\end{theorem}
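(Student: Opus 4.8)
The plan is to recast the tangentiality requirement as an explicit linear system for the coefficient function $A$ and then exhibit a $(k+1)$-parameter family of solutions, in close analogy with the GJMS and curved Ovsienko--Redou constructions. Let $\mathcal{E}$ denote the ambient Euler operator and $\cQ\in\cmE[2]$ the canonical defining density, so that $\cDelta$, multiplication by $\cQ$, and $\mathcal{E}$ form an $\mathfrak{sl}_2$-triple; recall that an ambient operator on $\cmE[w_1]\otimes\cmE[w_2]\otimes\cmE[w_3]$ is tangential exactly when it carries the ideal generated by $\cQ$ in each of the three tensor factors back into that ideal, so that it descends to a tridifferential operator on the associated conformal densities. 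The workhorse identity, a consequence of the $\mathfrak{sl}_2$ relations, is
\begin{equation*}
    \cDelta^{j}(\cQ f) \equiv 2j\,(2s+n+4-2j)\,\cDelta^{j-1}f \pmod{\cQ}, \qquad \mathcal{E}f = sf,
\end{equation*}
together with its unreduced form, which lets one commute a factor of $\cQ$ past each of the five blocks of Laplacians in \eqref{eqn:tridifferential-operator}.

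First I would compute, modulo $\cQ$, the result of applying the operator in \eqref{eqn:tridifferential-operator} after replacing one of $\cu,\cv,\cw$ by $\cQ$ times an arbitrary homogeneous function. Commuting the extra $\cQ$ outward, each term either acquires a trailing factor of $\cQ$ (hence vanishes modulo $\cQ$) or releases a scalar of the above type at one of the blocks through which that $\cQ$ passes: for $\cu$ these are the $\alpha_3$-, $\alpha_2$-, and $\alpha_1$-blocks, for $\cv$ the $\alpha_4$-, $\alpha_2$-, and $\alpha_1$-blocks, and for $\cw$ only the $\alpha_5$- and $\alpha_1$-blocks. The binomial normalization is essential here: the multinomial ratio $\binom{k}{\gamma+e_i}/\binom{k-1}{\gamma}$ cancels exactly the factor $j$ in $2j(\cdots)$, leaving coefficients that are affine-linear (Pochhammer-type) in the multi-index, in $n$, and in $w_1,w_2,w_3$. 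Reading off the resulting identities — and using that distinct monomial operators of order $k-1$ are linearly independent — $\cD$ is tangential if and only if $L^{(1)}A = L^{(2)}A = L^{(3)}A = 0$ in $\sF_{k-1}^5$, where $L^{(1)}$ and $L^{(2)}$ are first-order difference operators coupling $A_{\gamma+e_1}$, $A_{\gamma+e_2}$, and $A_{\gamma+e_3}$ (respectively $A_{\gamma+e_4}$), and $L^{(3)}$ couples $A_{\gamma+e_1}$ and $A_{\gamma+e_5}$. I expect the main obstacle to lie in this reduction, and in particular in verifying that $L^{(1)},L^{(2)},L^{(3)}$ are mutually compatible, so that the overdetermined system they define (for $k\ge 2$) is consistent; this is the tridifferential counterpart of the $\mathfrak{sl}_2$ commutator identity underlying the earlier constructions, and should come down to the commutativity of the "lower $\gamma_3$, $\gamma_4$, or $\gamma_5$ at the cost of $\gamma_1$ or $\gamma_2$" moves that the three equations encode.

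It then remains to produce the $(k+1)$-dimensional family. Using $L^{(3)}A=0$ to solve for $A_{\gamma+e_5}$ and $L^{(1)}A=L^{(2)}A=0$ to solve for $A_{\gamma+e_3}$ and $A_{\gamma+e_4}$, one lowers $\gamma_3,\gamma_4,\gamma_5$ step by step to zero, so that $A$ is determined by its restriction to the $(k+1)$-element set $\{\gamma\in\sI_k^5 : \gamma_3=\gamma_4=\gamma_5=0\}$; the quantities one divides by are Pochhammer-type linear factors, and the hypotheses ($n$ odd, or $n$ even with $n\ge 2k$) are exactly what keeps the relevant ones nonzero, just as in the GJMS obstruction. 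Carrying out this solution explicitly — equivalently, writing $A$ as a suitable product of Pochhammer symbols with a free integer parameter $\ell\in\{0,\dots,k\}$ — gives $k+1$ solutions; one checks they are linearly independent by a leading-term argument valid for all $w_1,w_2,w_3\in\bR$ (so that when a needed factor degenerates for exceptional weights the solutions persist by polynomial dependence on the weights), and they are tangential by construction. This produces the asserted $(k+1)$-dimensional family $A\in\sF_k^5$, completing the proof.
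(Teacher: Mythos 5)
Your first half is the paper's: commuting $\cQ$ through each block via the $\mathfrak{sl}_2$ identity and matching coefficients of the independent monomial operators yields exactly three recurrence relations, coupling $A_{\gamma+\vec e_1},A_{\gamma+\vec e_2},A_{\gamma+\vec e_3}$, then $A_{\gamma+\vec e_1},A_{\gamma+\vec e_2},A_{\gamma+\vec e_4}$, then $A_{\gamma+\vec e_1},A_{\gamma+\vec e_5}$ (these are the paper's $B^{(1)},B^{(2)},B^{(3)}$ in \eqref{eqn:tridifferential-B1}--\eqref{eqn:tridifferential-B3}), and \cref{what-to-check-for-tangential} then gives tangentiality. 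The divergence, and the gap, is in the dimension count. You propose to solve the recurrences directly, lowering $\gamma_3,\gamma_4,\gamma_5$ to zero so that $A$ is determined by its values on the $(k+1)$-element set $\{\gamma_3=\gamma_4=\gamma_5=0\}$. But you yourself identify the consistency of this overdetermined system as ``the main obstacle'' and never resolve it: different orders of lowering must give the same value, and the prescribed values on $\{\gamma_3=\gamma_4=\gamma_5=0\}$ must not be constrained. Until that is done, your reduction only bounds the solution space above by $k+1$; it does not produce a $(k+1)$-dimensional family. This is precisely where the paper does its real work, encoding the compatibility in the chain complex \eqref{eqn:chain-complex} (note that the lowering operators do \emph{not} all commute --- the $\alpha_2$-coefficient shifts, which is why the paper needs both $F_{2,\boldsymbol w}$ and $F_{2,\boldsymbol w}'$), proving exactness away from $\sF_k^5$ for generic weights by exhibiting explicit preimages, and reading off $\dim\ker d_{1,\boldsymbol w}=\chi=k+1$ from the Euler characteristic.

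The second gap is the exceptional weights. Your step-by-step solution divides by factors such as $\frac n2+w_1-\alpha_3-1$, $\frac n2+w_2-\alpha_4-1$, $\frac n2+w_3-\alpha_5-1$, which vanish for infinitely many admissible weights (e.g.\ $w_1=\alpha_3+1-\frac n2$ with $0\le\alpha_3\le k-1$), while the theorem is asserted for \emph{all} $w_1,w_2,w_3\in\bR$. Your claim that the hypotheses ``$n$ odd, or $n$ even with $n\ge 2k$'' keep these factors nonzero is incorrect: that hypothesis enters only through \cref{what-to-check-for-tangential} (invariance under ambient equivalence), and has no bearing on the weight-dependent coefficients. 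The fallback ``solutions persist by polynomial dependence on the weights'' is not a proof: a family of kernel elements depending polynomially on $(w_1,w_2,w_3)$ can degenerate (become linearly dependent, or vanish) exactly at the special weights, so linear independence there does not follow, and no explicit Pochhammer family or leading-term argument is actually exhibited. The paper closes this by a soft argument you would need to import (or replace with a genuinely uniform construction): $\boldsymbol w\mapsto\dim\ker d_{1,\boldsymbol w}$ is upper semicontinuous, and equals $k+1$ on the dense set $W=\{2w_1,2w_2,2w_3\notin\bZ\}$, hence is at least $k+1$ everywhere.
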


Here $\cDelta$ denotes the Laplacian on an ambient space $(\cmG,\cg)$ for a conformal $n$-manifold, $\cmE[w]$ denotes the space of smooth functions on $\cmG$ that are homogeneous of degree $w \in \bR$ with respect to dilations,
\begin{equation*}
 \sI_k^\ell := \left\{ (\alpha_1,\dotsc,\alpha_\ell) \in \bN_0^\ell \suchthat \alpha_1 + \dotsm + \alpha_\ell = k \right\}
\end{equation*}
denotes the set of $\ell$-tuples of nonnegative integers that sum to $k$,
\begin{equation*}
    \sF_k^\ell := \left\{ A \colon \sI_k^\ell \to \bR \right\}
\end{equation*}
is the vector space of scalar functions on $\sI_k^\ell$, and $\cu \cdot \cv$ denotes the pointwise product on $C^\infty(\cmG)$.
The assertion that $\cD$ is tangential means that $\cD$ descends to a conformally invariant tridifferential operator on the underlying conformal manifold.
See \cref{sec:bg} for further discussion.

The first part of the proof of \cref{tridifferential} is analogous to the construction of the GJMS operators by Graham et al.:
We use the aforementioned $\mathfrak{sl}_2$-triple to compute the commutators of $\cD$ with a particular defining function of the metric cone $\mG \subset \cmG$ and thereby derive a set of recurrence relations for the components $A_\alpha$ that are sufficient for $\cD$ to be tangential.
Our proof of the dimension estimate is new:
We realize solutions of these recurrence relations as elements of the kernel of the operator $d_1$ in a chain complex
\begin{equation}
    \label{eqn:chain-complex}
    0 \longrightarrow \sF_k^5 \overset{d_1}{\longrightarrow} \bigl(\sF_{k-1}^5\bigr)^3 \overset{d_2}{\longrightarrow} \bigl(\sF_{k-2}^5 \bigr)^3 \overset{d_3}{\longrightarrow} \sF_{k-3}^5 \longrightarrow 0 ,
\end{equation}
and then show that for generic weights $(w_1,w_2,w_3) \in \bR$, the cohomology of this complex is trivial except at $\sF_k^5$.
The conclusion then follows from a computation of the Euler characteristic and the upper semicontinuity of the dimension of the kernel of a continuous one-parameter family of linear maps.

Notably, \cref{tridifferential} always produces multidimensional families of operators, even on the sphere.
For comparison, the GJMS operators are unique up to scaling on the sphere~\cite{Branson1995}, while the Ovsienko--Redou operators are unique up to scaling for generic weights~\cite{OvsienkoRedou2003}.
The existence of such families was previously known in the special case $k=2$ and $w_1=w_2=w_3=-\frac{n-4}{2}$, where the $\sigma_2$-operator~\cite{Case2019fl}*{Remark~2.2} and the modification $u \mapsto uP_4u^2$ of the Paneitz operator give independent examples, but the dimension estimate is otherwise new.

Our proof of \cref{tridifferential} computes, for generic weights, the dimension of the space of operators on the sphere which can be written ambiently in the form of Equation~\eqref{eqn:tridifferential-operator}.
Inspired by Clerc's work on the Ovsienko--Redou operators~\cites{Clerc2016,Clerc2017}, we expect that the dimension is weight-dependent in general, but computable.
Finally, while one could avoid the chain complex~\eqref{eqn:chain-complex} and instead directly solve the recurrence relations, we believe that our approach is more robust.
To illustrate this point, in \cref{sec:tangential} we also construct new conformally invariant differential and bidifferential operators.

For applications to variational curvature prescription problems, one would like to know which of the operators constructed by \cref{tridifferential} are formally self-adjoint.
Two necessary conditions for formal self-adjointness are that $\cD$ is symmetric and that $w_1=w_2=w_3=-\frac{n-2k}{4}$, provided the operator is nonzero.
Remarkably, the symmetrizations of \emph{all} of the operators in Equation~\eqref{eqn:tridifferential-operator} are formally self-adjoint in the ambient space if this weight assumption is made.
Indeed:

\begin{theorem}
    \label{fsa}
    Fix integers $n > 2k \geq 2$.
    Let $A \in \sF_k^5$ be such that Equation~\eqref{eqn:tridifferential-operator} defines a tangential operator $\cD$ on $\cmE\bigl[-\frac{n-2k}{4}\bigr]^{\otimes3}$.
    Then
    \begin{equation*}
     \cD'(\cu_1 \otimes \cu_2 \otimes \cu_3) := \cD\bigl( \cu_1 \otimes \cu_2 \otimes \cu_3 \bigr) + \cD\bigl( \cu_2 \otimes \cu_3 \otimes \cu_1 \bigr) + \cD\bigl( \cu_3 \otimes \cu_1 \otimes \cu_2 \bigr) 
    \end{equation*}
    is formally self-adjoint and tangential on $\cmE\bigl[-\frac{n-2k}{4}\bigr]^{\otimes 3}$.
\end{theorem}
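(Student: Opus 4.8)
The plan is to establish formal self-adjointness of $\cD'$ on $\cmE[-\frac{n-2k}{4}]^{\otimes 3}$ directly in the ambient space, i.e.\ to show that for all compactly supported homogeneous $\cu_i, \cv_i$ of weight $-\frac{n-2k}{4}$,
\begin{equation*}
    \int_{\cmG} \overline{\cv_1 \otimes \cv_2 \otimes \cv_3} \cdot \cD'(\cu_1 \otimes \cu_2 \otimes \cu_3) \, \dvol = \int_{\cmG} \overline{\cD'(\cv_1 \otimes \cv_2 \otimes \cv_3)} \cdot (\cu_1 \otimes \cu_2 \otimes \cu_3) \, \dvol ,
\end{equation*}
where the pairing is really the pushforward of a density to the conformal manifold; the weight condition $w_1=w_2=w_3=-\frac{n-2k}{4}$ is exactly what makes the triple product a density of the correct total weight $-n$ so that the integral is defined. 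Since $\cD$ is already known to be tangential by hypothesis, and tangentiality of the symmetrization $\cD'$ is immediate from linearity and the fact that the three cyclic summands are each tangential (each is $\cD$ precomposed with a permutation of the weight-symmetric tuple), the entire content is the self-adjointness identity.

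The key mechanism is integration by parts for the ambient Laplacian: $\cDelta$ is formally self-adjoint on $C^\infty_c(\cmG)$, so $\int \overline{\cf}\, \cDelta \cg = \int \overline{\cDelta \cf}\, \cg$ whenever the product $\overline{\cf}\cg$ is a compactly supported density of weight $-n$. First I would record the general pairing identity: for $A \in \sF_k^5$ and the operator $\cD_A$ defined by Equation~\eqref{eqn:tridifferential-operator}, one computes
\begin{equation*}
    \int_{\cmG} \overline{\cw_0} \cdot \cD_A(\cu_1 \otimes \cu_2 \otimes \cu_3)\, \dvol = \sum_{\alpha \in \sI_k^5} \binom{k}{\alpha} A_\alpha \int_{\cmG} \overline{(\cDelta^{\alpha_1}\cw_0)} \cdot \cDelta^{\alpha_2}\bigl( (\cDelta^{\alpha_3}\cu_1)(\cDelta^{\alpha_4}\cu_2)\bigr) \cdot (\cDelta^{\alpha_5}\cu_3)\, \dvol
\end{equation*}
by moving $\cDelta^{\alpha_1}$ off of the outermost factor (this needs $\cw_0$ to have the right weight, which holds when $\cw_0 \in \cmE[-\frac{n-2k}{4}]$, since then $\cw_0 \cdot (\text{triple product of weight-}(-\frac{n-2k}{4})\text{ functions})$ has weight $-n$). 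One then integrates the innermost Laplacians by parts as well. The outcome is that the pairing, viewed as a functional of the $5$-tuple of "slots'' $(\cw_0, \cu_1, \cu_2, \cu_3)$ together with the nesting structure, transforms under a simple relabeling of $\sI_k^5$. Concretely, integrating by parts the $\cDelta^{\alpha_1}$ and $\cDelta^{\alpha_2}$ factors shows that the bilinear form associated with $\cD_A$, when all four arguments carry the critical weight, equals the bilinear form associated with $\cD_{\widehat A}$ for an explicit involution $\widehat{(\cdot)}$ on $\sF_k^5$ induced by a permutation $\tau$ of the index $\alpha = (\alpha_1,\dots,\alpha_5)$ — roughly $\tau$ swaps the roles of the "output'' nesting exponents with the "input'' ones. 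The adjoint of $\cD_A$ in the sense of Equation~\eqref{eqn:tridifferential-operator} is thus $\cD_A$ precomposed/postcomposed with a fixed permutation of the three tensor slots and an index permutation on $A$.

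The heart of the argument is then purely combinatorial: one checks that after summing over the three cyclic permutations in the definition of $\cD'$, the resulting $A' \in \sF_k^5$ (the cyclic symmetrization of $A$ in the appropriate sense) is fixed by the involution $\widehat{(\cdot)}$, so $\cD'$ equals its own formal adjoint. I expect the main obstacle to be bookkeeping: getting the index permutation $\tau$ exactly right — in particular tracking which of the five exponents $\alpha_1,\dots,\alpha_5$ moves where under each integration by parts, and verifying that the cyclic sum over the three function slots precisely symmetrizes the orbit of $\tau$ on $\sI_k^5$. A secondary technical point is justifying the integration by parts: one must check the homogeneity of all intermediate products is $-n$ (so they push down to genuine top-degree forms) and handle the non-compactness of $\cmG$ by the standard device of working with the pushforward density on the compact conformal manifold, or equivalently restricting to the ambient space over a coordinate chart and using that $\cDelta$ preserves homogeneity. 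Neither of these is deep, but the index combinatorics is where care is needed; once $\tau$ and its interaction with cyclic symmetrization are pinned down, the theorem follows. I would also note explicitly that when $n$ is even the hypothesis $n>2k$ is what the earlier results require for $\cD$ to exist and be tangential, so no separate parity discussion is needed here.
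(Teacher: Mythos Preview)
Your proposal has a genuine gap: the claim that the cyclic symmetrization of $A$ is automatically fixed by the adjoint involution $\widehat{(\cdot)}$ is false for general $A\in\sF_k^5$, and the integration-by-parts bookkeeping alone cannot close it. A concrete counterexample at $k=1$: take $A$ supported at $\vec{e}_3$, so $\cD(\cu\otimes\cv\otimes\cw)=(\cDelta\cu)\cv\cw$ and $\cD'(\cu_1\otimes\cu_2\otimes\cu_3)=\sum_{\text{cyc}}(\cDelta\cu_1)\cu_2\cu_3$. The Dirichlet form $\kD'(\cu_0,\cu_1,\cu_2,\cu_3)=\int \cu_0\sum_{\text{cyc}}(\cDelta\cu_1)\cu_2\cu_3$ is \emph{not} symmetric under $\cu_0\leftrightarrow\cu_1$, since the difference equals $\int \cu_2\cu_3(\cu_0\cDelta\cu_1-\cu_1\cDelta\cu_0)$, which does not vanish. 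So cyclic symmetrization is insufficient; the hypothesis that $\cD$ be \emph{tangential} (equivalently, that $A$ solve the recurrence relations) must enter the argument, and in your sketch it never does beyond the trivial observation that $\cD'$ inherits tangentiality.

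What the paper actually proves is that the recurrence relations, specialized to the weights $w_1=w_2=w_3=-\tfrac{n-2k}{4}$, force three index symmetries of $A$ itself:
\[
A_{\alpha_1,\alpha_2,\alpha_3,\alpha_4,\alpha_5}=A_{\alpha_1,\alpha_2,\alpha_4,\alpha_3,\alpha_5}=A_{\alpha_5,\alpha_2,\alpha_3,\alpha_4,\alpha_1}=A_{\alpha_3,\alpha_2,\alpha_1,\alpha_5,\alpha_4}.
\]
The first two follow directly from the recurrences via an elementary lemma (if $f(\alpha_1{+}1)A_{\alpha_1+1,\alpha_2}=f(\alpha_2{+}1)A_{\alpha_1,\alpha_2+1}$ with $f$ nonvanishing, then $A$ is symmetric); the third requires a double induction, first handling $\alpha_4=\alpha_5$ and then stepping in $\alpha_5-\alpha_4$. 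The assumption $n>2k$ is used precisely here, to ensure the relevant coefficients $n+2k-4\alpha_j-4$ never vanish. Once these symmetries are in hand, a single integration by parts of $\cDelta^{\alpha_1}$ gives $\int\cx\,\cD(\cu\otimes\cv\otimes\cw)=\int\cu\,\cD(\cx\otimes\cw\otimes\cv)$ and its companions, from which $\kD'$ is fully symmetric. The missing ingredient in your plan is exactly this inductive derivation of the coefficient symmetries from tangentiality.
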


More precisely, the assumption of \cref{fsa} is that $A$ solves the recurrence relations~\eqref{eqn:recurrence}.
The assumption $n>2k$ can be relaxed to $n=2k$ after suitably picking $A_\alpha$ when one of $\alpha_1,\alpha_3,\alpha_4,\alpha_5=k$;
cf.\ \cref{rk:why-strict-inequality}.

\Cref{fsa} \emph{only} states that $\cD$ is formally self-adjoint, not that the induced operator $\iota^\ast\cD$ on the underlying conformal $n$-manifold is formally self-adjoint.
We do expect that $\iota^\ast\cD$ is formally self-adjoint.
The proof of \cref{fsa} is by an inductive argument that shows that the recursive relations defining $A_\alpha$ imply certain symmetries under the above weight assumptions.

There is a linear relation among the summands in the right-hand side of Equation~\eqref{eqn:tridifferential-operator} and the operators obtained by permuting $\cu_1,\cu_2,\cu_3$;
see \cref{rk:dimension}.
Based on this observation, we expect that the space of formally self-adjoint, conformally covariant, tridifferential operators of order $2k<n$ on the standard conformal $n$-sphere is $k$-dimensional.

For comparison, both the ambient operator $\cDelta^k$, which induces the GJMS operator of order $2k$, and the ambient operator
\begin{equation}
 \label{eqn:ovsienko-redou}
 \begin{split}
  \cD_{2k}(\cu \otimes \cv) & := \sum_{\alpha \in \sI_k^3} \binom{k}{\alpha}A_{\alpha}\cDelta^{\alpha_1}\bigl( (\cDelta^{\alpha_2}\cu) (\cDelta^{\alpha_3}\cv) \bigr) , \\
  A_{\alpha} & := \frac{\Gamma\bigl(\frac{n+4k}{6}-\alpha_1\bigr) \Gamma\bigl(\frac{n+4k}{6}-\alpha_2\bigr) \Gamma\bigl(\frac{n+4k}{6}-\alpha_3\bigr)}{\Gamma\bigl(\frac{n-2k}{6}\bigr) \Gamma\bigl(\frac{n+4k}{6}\bigr)^2} ,
 \end{split}
\end{equation}
which induces the curved Ovsienko--Redou operator of order $2k$, are formally self-adjoint in the ambient space.
It is known that the induced operators are formally self-adjoint, but the known proofs are quite involved:

The first proof of the formal self-adjointness of the GJMS operators used the identification of $P_{2k}$ in terms of the scattering operator for a Poincar\'e manifold and Green's theorem~\cite{GrahamZworski2003}.
While formal arguments allow one to bypass some of the analytic ingredients in scattering theory~\cite{FeffermanGraham2002}, the relation between $P_{2k}$ and the scattering operator is a manifestation of the realization of $P_{2k}$ as the obstruction to the existence of a smooth harmonic homogeneous extension~\cite{GJMS1992}.
This relation is not available for the curved Ovsienko--Redou operators or their related differential operators.
The formal self-adjointness of $P_{2k}$ can also be proved using Juhl's recursive formula in terms of formally self-adjoint second-order operators~\cites{Juhl2009,FeffermanGraham2013} or by renormalizing the energy of an operator of order $2k$ in the Poincar\'e space~\cite{CaseYan2024}, and both of these arguments generalize to the curved Ovsienko--Redou operators and their linear analogues~\cites{CaseYan2024,ChernYan2024}.
Both approaches involve complicated computations with many favorable cancellations.
It seems difficult to adapt these computations to the operators of \cref{fsa}, raising the question of whether there is a relatively simple explanation for these cancellations.
We leave this as an interesting open problem.

Our differential and bidifferential analogues of \cref{tridifferential} are also formally self-adjoint in the ambient space under the appropriate weight assumptions;
see \cref{sec:fsa} for details.

This article is organized as follows:

In \cref{sec:bg} we recall the key ingredients in the construction of conformally invariant polydifferential operators via tangential operators in the Fefferman--Graham ambient space.

In \cref{sec:tangential} we prove \cref{tridifferential}.
We also prove its (bi)differential analogues.

In \cref{sec:fsa} we prove \cref{fsa} and its (bi)differential analogues.

\section{Weyl operators}
\label{sec:bg}

In this section we collect required background on polydifferential operators and tangential operators in the ambient space.
Our discussion follows that of Case and Yan~\cite{CaseYan2024};
see also~\cites{CaseLinYuan2018b,CaseLinYuan2022or} for additional discussion of polydifferential operators and~\cites{FeffermanGraham2012,FeffermanHirachi2003,GJMS1992} for additional discussion of the ambient space and tangential operators thereon. 

A conformal manifold is a pair $(M^n,\kc)$ of a smooth $n$-manifold $M$ and a conformal class $\kc$;
i.e.\ $\kc$ is an equivalence class of (pseudo-)Riemannian metrics on $M$ for the equivalence relation $\hg \sim g$ if and only if there is an $\Upsilon \in C^\infty(M)$ such that $\hg = e^{2\Upsilon}g$.
We assume throughout that $n \geq 3$.
This data is equivalent to a choice of ray subbundle
\begin{equation*}
    \mG := \left\{ (x,g_x) \suchthat g \in \kc \right\} \subset S^2T^\ast M .
\end{equation*}
Define the projection $\pi \colon \mG \to M$ and the dilations $\delta_s \colon \mG \to \mG$, $s > 0$, by
\begin{align*}
    \pi(x,g_x) & := x , \\
    \delta_s(x,g_x) & := (x,s^2g_x) ,
\end{align*}
respectively.
Denote by $\bg$ the tautological section of $S^2T^\ast\mG$ defined by
\begin{equation*}
    \bg(X,Y) := g_x(\pi_\ast X , \pi_\ast Y)
\end{equation*}
for $X,Y \in T_{(x,g_x)}\mG$.

The space of \defn{conformal densities} of weight $w \in \bR$ on $(M^n,\kc)$ is
\begin{equation*}
    \mE[w] := \left\{ u \in C^\infty(\mG) \suchthat \text{$\delta_s^\ast u = s^wu$ for all $s>0$} \right\} .
\end{equation*}
A metric $g \in \kc$ determines a section of $\pi \colon \mG \to M$ by $g(x) := (x,g_x)$.
Pullback by $g$ induces a trivialization of $\mE[w]$;
i.e.\ the restriction $g^\ast \colon \mE[w] \to C^\infty(M)$ is bijective.
Note that if $u \in \mE[w]$ and $\hg = e^{2\Upsilon}g \in \kc$, then
\begin{equation}
    \label{eqn:pullback-on-densities}
    \hg^\ast u = e^{w\Upsilon}g^\ast u .
\end{equation}

An \defn{ambient space} for $(M^n,\kc)$ is a pseudo-Riemannian manifold $(\cmG^{n+2},\cg)$ together with a proper embedding $\iota \colon \mG \to \cmG$ and dilations $\cdelta_s \colon \cmG \to \cmG$, $s>0$, such that
\begin{enumerate}
    \item $\iota^\ast\cg = \bg$,
    \item $\cdelta_s^\ast\cg = s^2\cg$ for all $s>0$,
    \item $\iota \circ \delta_s = \cdelta_s \circ \iota$ for all $s > 0$, and
    \item $\Ric(\cg) = O^+(\rho^{(n-2)/2})$ if $n \geq 4$ is even, and $\Ric(\cg) = O(\rho^\infty)$ otherwise.
\end{enumerate}
Here $\Ric$ is the Ricci tensor, $\rho$ is a defining function for $\iota(\mG)$, and $O^+(\rho^m)$ is the set of all sections $T$ of $S^2T^\ast\cmG$ such that $T = O(\rho^m)$ and for every $z = (x,g_x) \in \mG$ there is a $t \in S^2T_x^\ast M$ such that $\iota_z^\ast(\rho^{-m}T) = \pi_z^\ast t$ and $\tr_{g_x}t = 0$.
Note that if $(\cmG,\cg)$ is an ambient space for $(M,\kc)$ and if $\cmU \subseteq \cmG$ is a dilation-invariant neighborhood of $\iota(\mG)$, then $(\cmU,\cg\rv_{\cmU})$ is also an ambient space for $(M,\kc)$.
Two ambient spaces $(\cmG_i,\cg_i)$, $i \in \{ 1,2 \}$, for $(M,\kc)$ are \defn{ambient equivalent} if, after shrinking $\cmG_1$ and $\cmG_2$ if necessary, there is a $\cdelta_s$-equivariant diffeomorphism $\Phi \colon \cmG_1 \to \cmG_2$ such that
\begin{enumerate}
    \item $\Phi \circ \iota = \iota$, and
    \item $\Phi^\ast\cg_2 - \cg_1 = O^+(\rho^{n/2})$ if $n \geq 4$ is even, and $\Phi^\ast\cg_2 - \cg_1 = O(\rho^\infty)$ otherwise.
\end{enumerate}
A fundamental result of Fefferman and Graham~\cite{FeffermanGraham2012}*{Theorem~2.3} states that every conformal manifold admits an ambient space and, moreover, this ambient space is unique up to ambient equivalence.

Given $w \in \bR$, denote by
\begin{equation*}
    \cmE[w] := \left \{ \cu \in C^\infty(\cmG) \suchthat \text{$\cdelta_s^\ast \cu = s^w \cu$ for all $s>0$} \right\}
\end{equation*}
the space of functions in the ambient space which have homogeneity $w$ with respect to dilations.
Note that $\iota^\ast \colon \cmE[w] \to \mE[w]$ is surjective and is natural with respect to ambient equivalence.
An \defn{extension} of $u \in \mE[w]$ is a choice of function $\cu \in \cmE[w]$ such that $u = \iota^\ast\cu$.

A \defn{polydifferential operator} of \defn{rank} $r \in \bN$ on $n$-manifolds is an assignment $D$ to each Riemannian manifold $(M^n,g)$ of a linear operator
\begin{equation*}
    D^g \colon C^\infty(M)^{\otimes(r-1)} \to C^\infty(M)
\end{equation*}
such that
\begin{equation*}
    D^g(u_1 \otimes \dotsm \otimes u_{r-1})
\end{equation*}
is given by a universal linear combination of complete contractions of covariant derivatives of the Riemann curvature tensor and of the inputs $u_1,\dotsc,u_{r-1}$, where $\Rm$ is regarded as a section of $\otimes^4T^\ast M$ and contractions are performed using $g^{-1}$.
A \defn{bidifferential} (resp.\ \defn{tridifferential}) operator is a polydifferential operator of rank $3$ (resp.\ rank $4$).
Note that if $D$ is a polydifferential operator of rank $r$ and if $\Phi \colon (M_1,g_1) \to (M_2,g_2)$ is an isometry, then
\begin{equation*}
    \Phi^\ast\bigl( D^{g_2}(u_1 \otimes \dotsm \otimes u_{r-1}) \bigr) = D^{g_1}\bigl( \Phi^\ast u_1 \otimes \dotsm \otimes \Phi^\ast u_{r-1} \bigr) 
\end{equation*}
for all $u_1,\dotsc,u_{r-1} \in C^\infty(M_2)$.
The \defn{Dirichlet form} $\kD$ associated to $D$ is the assignment to each Riemannian manifold $(M^n,g)$ of the linear functional
\begin{equation*}
    \kD \colon C_0^\infty(M)^{\otimes r} \to \bR
\end{equation*}
defined by
\begin{equation*}
    \kD^g( u_1 \otimes \dotsm \otimes u_r) := \int_M u_1 D^g( u_2 \otimes \dotsm \otimes u_r ) \dvol_g ,
\end{equation*}
where $C_0^\infty(M)$ denotes the space of compactly-supported functions on $M$.
We say that $D$ is \defn{formally self-adjoint} if $\kD$ is symmetric.
Note that if $D$ is formally self-adjoint, then it is symmetric.

A polydifferential operator $D$ has \defn{homogeneity} $h \in \bR$ if $D^{c^2g} = c^hD^g$ for all constants $c>0$ and all Riemannian manifolds $(M^n,g)$.
In this case we say that $D$ is \defn{homogeneous}.
Since $\nabla^k\Rm$, $\nabla^ku$, and $g^{-1}$ have homogeneity $2$, $0$, and $-2$, respectively, the homogeneity of a polydifferential operator, when defined, is always a nonpositive even integer.

An \defn{ambient polydifferential operator} on $n$-manifolds is a homogeneous polydifferential operator on $(n+2)$-manifolds.
We sometimes write
\begin{equation*}
    \cD \colon \cmE[w_1] \otimes \dotsm \otimes \cmE[w_{r-1}] \to \cmE[w-2k] ,
\end{equation*}
$w := \sum w_i$, as shorthand for an ambient polydifferential operator of rank $r$ and homogeneity $-2k$ on $n$-manifolds.
This notation will be used to specify situations in which $\cD$ is regarded as an assignment to each ambient space $(\cmG^{n+2},\cg)$ of a multilinear differential operator on homogeneous functions.

An ambient polydifferential operator
\begin{equation*}
    \cD \colon \cmE[w_1] \otimes \dotsm \otimes \cmE[w_{r-1}] \to \cmE[w - 2k]
\end{equation*}
on $n$-manifolds is \defn{tangential} if
\begin{equation}
    \label{eqn:restrict-ambient-polydifferential}
    \cu_1 \otimes \dotsm \otimes \cu_{r-1} \mapsto \iota^\ast \bigl( \cD^{\cg}( \cu_1 \otimes \dotsm \otimes \cu_{r-1}) \bigr)
\end{equation}
depends only on $\iota^\ast\cu_1,\dotsc,\iota^\ast\cu_{r-1}$ and is invariant with respect to ambient equivalence.
It follows that the operator
\begin{equation*}
    D \colon \mE[w_1] \otimes \dotsm \otimes \mE[w_{r-1}] \to \mE[ w - 2k]
\end{equation*}
defined by
\begin{equation*}
 D(u_1 \otimes \dotsm \otimes u_{r-1}) := \iota^\ast \bigl( \cD^{\cg}( \cu_1 \otimes \dotsm \otimes \cu_{r-1}) \bigr)
\end{equation*}
for some choice of extensions $\cu_j \in \cmE[w_j]$  of $u_j$ is well-defined.
We deduce from Equation~\eqref{eqn:pullback-on-densities} that the operator
\begin{equation*}
    D^g \colon C^\infty(M)^{\otimes(r-1)} \to C^\infty(M)
\end{equation*}
defined by
\begin{equation*}
    D^g(u_1 \otimes \dotsm \otimes u_{r-1}) := g^\ast\Bigl( D\bigl((g^\ast)^{-1}u_1 \otimes \dotsm \otimes (g^\ast)^{-1}u_{r-1}\bigr) \Bigr)
\end{equation*}
is conformally invariant:
if $\hg = e^{2\Upsilon}g$, then
\begin{equation*}
    D^{\hg}(u_1 \otimes \dotsm \otimes u_{r-1}) = e^{(w-2k)\Upsilon}D^g\bigl( e^{-w_1\Upsilon}u_1 \otimes \dotsm \otimes e^{-w_{r-1}\Upsilon}u_{r-1} \bigr) .
\end{equation*}
Note that if $D$ is formally self-adjoint and nonzero, then $w_1=\dotsm=w_{r-1}$ and $w_1+w-2k=-n$.
Hence $w_j = -\frac{n-2k}{r}$ for each $j \in \{ 1 , \dotsc , r-1 \}$.

Let $\cD$ be an ambient polydifferential operator of rank $r$ and homogeneity $-2k$ on $n$-manifolds.
In order to check that
\begin{equation*}
    \cD \colon \cmE[w_1] \otimes \dotsm \otimes \cmE[w_{r-1}] \to \cmE[ w - 2k ]
\end{equation*}
is tangential, it suffices to show that the commutators
\begin{multline*}
    [ \cD , \cQ ]_j \colon \cmE[w_1] \otimes \dotsm \otimes \cmE[w_{j-1}] \otimes \cmE[w_j-2] \otimes \cmE[w_{j+1}] \otimes \dotsm \otimes \cmE[w_{r-1}] \\ \to \cmE[ w - 2k]
\end{multline*}
vanish for each $j \in \{ 1, \dotsc, r-1 \}$, where $\cQ := \cg(\cX,\cX)$ for $\cX$ the infinitesimal generator of dilations and
\begin{multline*}
    [ \cD , \cQ]_j( \cu_1 \otimes \dotsm \otimes \cu_{r-1}) := \cD\bigl( \cu_1 \otimes \dotsm \otimes \cu_{j-1} \otimes \cQ\cu_j \otimes \cu_{j+1} \otimes \dotsm \otimes \cu_{r-1} \bigr) \\ - \cQ\cD\bigl( \cu_1 \otimes \dotsm \otimes \cu_{r-1}\bigr) :
\end{multline*}

\begin{proposition}
    \label{what-to-check-for-tangential}
    Fix integers $k \geq 0$ and $n \geq 3$.
    Let $\cD$ be an ambient polydifferential operator of rank $r$ and homogeneity $-2k$ on $n$-manifolds;
    if $n$ is even, then assume additionally that $n \geq 2k$.
    Suppose that $w_1,\dotsc,w_{r-1} \in \bR$ are such that if $j \in \{ 1, \dotsc, r-1 \}$, then
    \begin{equation*}
        [ \cD , \cQ ]_j = 0
    \end{equation*}
    on $\cmE[w_1] \otimes \dotsm \otimes \cmE[w_{j-1}] \otimes \cmE[w_j-2] \otimes \cmE[w_{j+1}] \otimes \dotsm \otimes \cmE[w_{r-1}]$.
    Then
    \begin{equation*}
        \cD \colon \cmE[w_1] \otimes \dotsm \otimes \cmE[w_{r-1}] \to \cmE[w-2k]
    \end{equation*}
    is tangential.
\end{proposition}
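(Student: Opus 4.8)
The definition of \emph{tangential} consists of two assertions, and the plan is to verify each in turn: that the map of Equation~\eqref{eqn:restrict-ambient-polydifferential} depends only on $\iota^\ast\cu_1,\dotsc,\iota^\ast\cu_{r-1}$, and that it is unchanged under ambient equivalence. I will use throughout that $\cQ = \cg(\cX,\cX)$ is a defining function for $\iota(\mG)$: it lies in $\cmE[2]$, it vanishes to first order along $\iota(\mG)$, and therefore every $\cf \in C^\infty(\cmG)$ with $\iota^\ast\cf = 0$ factors uniquely as $\cf = \cQ\ch$ with $\ch \in C^\infty(\cmG)$; a homogeneity count then gives $\ch \in \cmE[w-2]$ when $\cf \in \cmE[w]$.

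\emph{Independence of the extension.} Suppose $\cu_j$ and $\cu_j'$ both extend $u_j \in \mE[w_j]$. Modifying the inputs one slot at a time, it suffices to show that replacing $\cu_j$ by $\cu_j'$ does not change $\iota^\ast\cD(\cu_1 \otimes \dotsm \otimes \cu_{r-1})$. Writing $\cu_j - \cu_j' = \cQ\cv_j$ with $\cv_j \in \cmE[w_j-2]$, we have
\begin{align*}
 \iota^\ast\cD(\dotsm \otimes \cu_j \otimes \dotsm) - \iota^\ast\cD(\dotsm \otimes \cu_j' \otimes \dotsm) &= \iota^\ast\cD(\dotsm \otimes \cQ\cv_j \otimes \dotsm) \\
 &= \iota^\ast\bigl( \cQ \cdot \cD(\dotsm \otimes \cv_j \otimes \dotsm) \bigr) = 0 ,
\end{align*}
where the middle equality is the hypothesis $[\cD,\cQ]_j = 0$ --- applicable because the $j$-th slot now carries weight $w_j-2$ and the other slots carry weights $w_i$ --- and the last uses $\iota^\ast\cQ = 0$. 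Iterating over $j$ shows that the map of Equation~\eqref{eqn:restrict-ambient-polydifferential} descends to a well-defined operator on $\mE[w_1] \otimes \dotsm \otimes \mE[w_{r-1}]$.

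\emph{Reduction of ambient-equivalence invariance to a metric comparison.} Let $\Phi \colon \cmG_1 \to \cmG_2$ realize an ambient equivalence, fix densities $u_j$, and fix extensions $\cu_j \in \cmE[w_j]$ on $\cmG_2$. Since $\Phi$ is $\cdelta_s$-equivariant and $\Phi \circ \iota = \iota$, each $\Phi^\ast\cu_j$ is an extension of $u_j$ on $\cmG_1$. Naturality of $\cD$ with respect to the isometry $\Phi \colon (\cmG_1,\Phi^\ast\cg_2) \to (\cmG_2,\cg_2)$ gives $\Phi^\ast\bigl(\cD^{\cg_2}(\cu_1 \otimes \dotsm \otimes \cu_{r-1})\bigr) = \cD^{\Phi^\ast\cg_2}(\Phi^\ast\cu_1 \otimes \dotsm \otimes \Phi^\ast\cu_{r-1})$; applying $\iota^\ast$ and using $\Phi \circ \iota = \iota$ reduces the required agreement of the operators computed on $\cmG_2$ and on $\cmG_1$ to the equality
\begin{equation*}
 \iota^\ast\cD^{\cg_1}(\Phi^\ast\cu_1 \otimes \dotsm \otimes \Phi^\ast\cu_{r-1}) = \iota^\ast\cD^{\Phi^\ast\cg_2}(\Phi^\ast\cu_1 \otimes \dotsm \otimes \Phi^\ast\cu_{r-1}) .
\end{equation*}
Since $\Phi^\ast\cg_2 - \cg_1 = O^+(\rho^{n/2})$ when $n$ is even and $O(\rho^\infty)$ otherwise, it is enough to prove: if $\cg$ is an ambient metric and $\cg' - \cg$ is $O^+(\rho^{n/2})$ ($n$ even) or $O(\rho^\infty)$ ($n$ odd), then $\iota^\ast\cD^\cg$ and $\iota^\ast\cD^{\cg'}$ agree on all extensions.

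\emph{The metric comparison.} When $n$ is odd, $\cg' - \cg = O(\rho^\infty)$ forces the inverse metrics, the Christoffel symbols, the curvature tensors, all their covariant derivatives, and the covariant derivatives of the fixed functions $\cu_j$, built from $\cg$ and from $\cg'$, to agree to infinite order along $\iota(\mG)$; since $\cD^\cg(\cu_1 \otimes \dotsm \otimes \cu_{r-1})$ is a universal contraction of these, the difference $\cD^{\cg'}(\cu_1 \otimes \dotsm \otimes \cu_{r-1}) - \cD^\cg(\cu_1 \otimes \dotsm \otimes \cu_{r-1})$ is $O(\rho^\infty)$ and restricts to $0$. The even case is the technical heart of the argument, and it is where the hypothesis $n \geq 2k$ enters: one works in Fefferman--Graham normal form, where --- exactly as in the original GJMS construction --- the singular $\partial_\rho^2$-coefficient of the inverse ambient metric carries a compensating factor of $\rho$, and a homogeneity-$(-2k)$ expression involves at most $2k$ covariant derivatives; tracking $\rho$-orders through these facts, and using the trace-free refinement encoded in the $O^+$-notation together with $\pi_\ast\cX = 0$ to control the contractions against $\cX$, one shows that the $O^+(\rho^{n/2})$-perturbation passes through $\cD$ to give an error vanishing to positive order in $\rho$, hence restricting to $0$, precisely when $n \geq 2k$. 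I expect keeping this $\rho$-order bookkeeping tight enough to reach the sharp threshold to be the main obstacle; everything else above is formal.
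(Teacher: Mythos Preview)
Your argument for independence of the extension is correct and is exactly the paper's argument: write the difference of two extensions as $\cQ\cv$, apply the commutator hypothesis, and use $\iota^\ast\cQ=0$.

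For invariance under ambient equivalence, the paper takes a different route: it simply cites \cite{CaseYan2024}*{Corollary~2.3}, which establishes that \emph{any} ambient polydifferential operator of homogeneity $-2k$ on $n$-manifolds (with $n\geq 2k$ if $n$ is even) yields a restriction that is invariant under ambient equivalence, independent of the commutator hypothesis. Your approach is more ambitious---you try to prove this from scratch---and your reduction via isometry-naturality to a comparison of two metrics on the same underlying space is correct. The odd-dimensional case is indeed immediate. But your even-dimensional sketch is not a proof: you identify the right ingredients (the $\rho\partial_\rho^2$ structure in normal form, the bound on the number of derivatives from homogeneity, the trace-free refinement in $O^+$), but you do not carry out the $\rho$-order count, and you yourself flag this as the main obstacle. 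That count is genuinely delicate---it is the content of the cited Case--Yan corollary, and ultimately rests on the analysis in \cite{FeffermanGraham2012} of how covariant derivatives of the ambient curvature behave under $O^+(\rho^{n/2})$ perturbations. So as written, this part of your proof has a gap; to close it you should either invoke the existing result or supply the full bookkeeping (which would amount to reproving that result).
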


\begin{proof}
    Case and Yan showed that~\eqref{eqn:restrict-ambient-polydifferential} is invariant with respect to ambient equivalence~\cite{CaseYan2024}*{Corollary~2.3}.

    Let $\cu_j \in \cmE[w_j]$, $j \in \{ 1, \dotsc, r-1 \}$.
    Suppose there is an $i \in \{ 1, \dotsc, r-1 \}$ such that $\iota^\ast\cu_i = 0$.
    Since $\cQ$ is a defining function for $\iota(\mG)$~\cite{GJMS1992}*{p.\ 560}, there is a $\cv \in \cmE[w_i-2]$ such that $\cu_i=\cQ\cv$.
    The vanishing of $[\cD,\cQ]_i$ implies that
    \begin{equation*}
        \cD(\cu_1 \otimes \dotsm \otimes \cu_{r-1}) = \cQ\cD(\cu_1 \otimes \dotsm \otimes \cu_{i-1} \otimes \cv \otimes \cu_{i+1} \otimes \dotsm \otimes \cu_{r-1}) .
    \end{equation*}
    In particular,
    \begin{equation*}
        \iota^\ast\bigl( \cD( \cu_1 \otimes \dotsm \otimes \cu_{r-1}) \bigr) = 0.
    \end{equation*}
    We conclude from linearity that~\eqref{eqn:restrict-ambient-polydifferential} depends only on $\iota^\ast\cu_1,\dotsc,\iota^\ast\cu_{r-1}$.
\end{proof}

For ambient polydifferential operators built from the ambient Laplacian and multiplication by scalar Riemannian invariants, one can use the fact that if $(\cmG^{n+2},\cg)$ is a straight ambient space, then for each $k \in \bN$ it holds that\footnote{
    We use the convention that $\Delta = -\nabla^a\nabla_a$, so that $\Delta \geq 0$ in Riemannian signature.
    This is the oppositive convention from~\cite{GJMS1992}, and is responsible for the sign in Equation~\eqref{eqn:fundamental-commutator}.
}
\begin{equation}
    \label{eqn:fundamental-commutator}
    [ \cDelta^k , \cQ ] = -2k\cDelta^{k-1}(2\cX + n + 4 - 2k)
\end{equation}
as operators on $C^\infty(\cmG)$~\cite{GJMS1992}*{Equation~(1.8)}.
Since one can always choose the ambient space to be straight~\cite{FeffermanGraham2012}*{Proposition~2.6}, and since $\cX\cu = w\cu$ if and only if $\cu \in \cmE[w]$, \cref{what-to-check-for-tangential} and Equation~\eqref{eqn:fundamental-commutator} give an efficient way to check tangentiality.
This is the strategy used in the GJMS~\cite{GJMS1992} and Case--Lin--Yuan~\cite{CaseLinYuan2022or} constructions, and is also the strategy we use in \cref{sec:tangential}.

\section{Conformally covariant tridifferential operators}
\label{sec:tangential}

In this section we prove \cref{tridifferential}. We do so by proving that the hypotheses of \cref{what-to-check-for-tangential} are satisfied if the coefficients of $\cD$ satisfy three recurrence relations. Solutions of the recurrence relations correspond to elements of the kernel of a linear map $d_{1,\boldsymbol{w}} \colon \sF_k^5 \to (\sF_{k-1}^5)^3$.
The theorem follows by computing $\dim \ker d_{1,\boldsymbol{w}}$ for generic weights $\boldsymbol{w}$.

In the remainder of this paper, we denote by $\vec{e}_j \in \bN_0^\ell$ the $\ell$-tuple
\begin{equation*}
    \vec{e}_j := (0,\dotsc,0,1,0,\dotsc,0)
\end{equation*}
with a $1$ in the $j$-th component and a $0$ everywhere else.
These are helpful in relating elements of $\sI_{k-1}^\ell$ and $\sI_k^\ell$.

\begin{proof}[Proof of \cref{tridifferential}]
    Define $\cD$ by Equation~\eqref{eqn:tridifferential-operator}.
    By \cref{what-to-check-for-tangential}, it suffices to check that the commutators $[\cD,\cQ]_j$, $j \in \{ 1, 2, 3 \}$, all vanish.

    The product rule for commutators gives
    \begin{equation*}
        \begin{split}
            [\cD, \cQ]_1(\cu \otimes \cv \otimes \cw) & = \sum_{\alpha\in\sI_k^5} \binom{k}{\alpha} A_{\alpha} \biggl[ \cDelta^{\alpha_1}\left( \cDelta^{\alpha_2} \bigl( ( [\cDelta^{\alpha_3}, \cQ]\cu ) (\cDelta^{\alpha_4}\cv) \bigr) \cdot \cDelta^{\alpha_5}\cw \right) \\
            & \qquad + \cDelta^{\alpha_1}\left( \bigl[\cDelta^{\alpha_2}, \cQ\bigr] \bigl( (\cDelta^{\alpha_3}\cu)(\cDelta^{\alpha_4}\cv) \bigr) \cdot \cDelta^{\alpha_5}\cw \right)\\
            & \qquad + \bigl[\cDelta^{\alpha_1}, \cQ\bigr] \left( \cDelta^{\alpha_2} \bigl( (\cDelta^{\alpha_3}\cu)(\cDelta^{\alpha_4}\cv) \bigr) \cdot \cDelta^{\alpha_5}\cw \right) \biggr] .
        \end{split}
    \end{equation*}
    Combining this with Equation~\eqref{eqn:fundamental-commutator} yields
    \begin{align*}
        \MoveEqLeft[1] [\cD, \cQ]_1(\cu \otimes \cv \otimes \cw) = -4\sum_{\alpha\in\sI_k^5} \binom{k}{\alpha}A_{\alpha} \times \biggl[ \\
        & \alpha_3\left(\frac{n}{2}+w_1-\alpha_3\right) \cDelta^{\alpha_1}\left( \cDelta^{\alpha_2} \bigl( ( \cDelta^{\alpha_3-1}\cu ) (\cDelta^{\alpha_4}\cv) \bigr) \cdot \cDelta^{\alpha_5}\cw \right) \\
        & + \alpha_2\left(\frac{n}{2}+w_1+w_2-2\alpha_3-2\alpha_4-\alpha_2\right) \cDelta^{\alpha_1}\left( \cDelta^{\alpha_2-1}\bigl( (\cDelta^{\alpha_3}\cu)(\cDelta^{\alpha_4}\cv) \bigr) \cdot \cDelta^{\alpha_5}\cw \right)\\
        & + \alpha_1\left(\frac{n}{2}+w-2k+\alpha_1\right) \cDelta^{\alpha_1-1}\left( \cDelta^{\alpha_2} \bigl( (\cDelta^{\alpha_3}\cu)(\cDelta^{\alpha_4}\cv) \bigr) \cdot \cDelta^{\alpha_5}\cw \right) \biggr]
    \end{align*}
    on $\cmE[w_1-2] \otimes \cmE[w_2] \otimes \cmE[w_3]$, where $w:=w_1+w_2+w_3$.
    Equivalently,
    \begin{equation*}
        [\cD,\cQ]_1(\cu \otimes \cv \otimes \cw) = -4k\sum_{\alpha \in \sI_{k-1}^5} \binom{k-1}{\alpha}B_\alpha^{(1)}\cDelta^{\alpha_1}\left( \cDelta^{\alpha_2} \bigl( (\cDelta^{\alpha_3}\cu) (\cDelta^{\alpha_4}\cv) \bigr) \cdot \cDelta^{\alpha_5}\cw \right) ,
    \end{equation*}
    where $B^{(1)} \in \sF_{k-1}^5$ is defined by
    \begin{equation}
        \label{eqn:tridifferential-B1}
        \begin{split}
            B_\alpha^{(1)} & := \left(\frac{n}{2}+w_1-\alpha_3-1\right)A_{\alpha+\vec{e}_3} + \left(\frac{n}{2}+w-2k+\alpha_1+1\right)A_{\alpha+\vec{e}_1} \\
            & \quad + \left(\frac{n}{2} + w_1 + w_2 - \alpha_2 - 2\alpha_3 - 2\alpha_4 - 1\right)A_{\alpha+\vec{e}_2} .
        \end{split}
    \end{equation}
    Similarly, we compute that
    \begin{align*}
        [\cD,\cQ]_2(\cu \otimes \cv \otimes \cw) & = -4k\sum_{\alpha \in \sI_{k-1}^5} \binom{k-1}{\alpha}B_\alpha^{(2)}\cDelta^{\alpha_1}\left( \cDelta^{\alpha_2} \bigl( (\cDelta^{\alpha_3}\cu) (\cDelta^{\alpha_4}\cv) \bigr) \cdot \cDelta^{\alpha_5}\cw \right) , \\
        [\cD,\cQ]_3(\cu \otimes \cv \otimes \cw) & = -4k\sum_{\alpha \in \sI_{k-1}^5} \binom{k-1}{\alpha}B_\alpha^{(3)}\cDelta^{\alpha_1}\left( \cDelta^{\alpha_2} \bigl( (\cDelta^{\alpha_3}\cu) (\cDelta^{\alpha_4}\cv) \bigr) \cdot \cDelta^{\alpha_5}\cw \right) ,
    \end{align*}
    where $B^{(2)},B^{(3)} \in \sF_{k-1}^5$ are defined by
    \begin{align}
        \label{eqn:tridifferential-B2} B_\alpha^{(2)} & := \left(\frac{n}{2} + w_2 - \alpha_4 - 1\right)A_{\alpha+\vec{e}_4} + \left(\frac{n}{2} + w - 2k + \alpha_1 + 1 \right)A_{\alpha + \vec{e}_1} \\
            \notag & \quad + \left(\frac{n}{2} + w_1 + w_2 - \alpha_2 - 2\alpha_3 - 2\alpha_4 - 1 \right)A_{\alpha + \vec{e}_2}, \\
        \label{eqn:tridifferential-B3} B_\alpha^{(3)} & := \left( \frac{n}{2} + w_3 - \alpha_5 - 1 \right)A_{\alpha + \vec{e}_5} + \left( \frac{n}{2} + w - 2k + \alpha_1 + 1 \right)A_{\alpha + \vec{e}_1} .
    \end{align}
    We deduce that $\cD$ is tangential if $B^{(1)},B^{(2)},B^{(3)}=0$.

    Given $\boldsymbol{w} = (w_1,w_2,w_3) \in \bR^3$, define $F_{j,\boldsymbol{w}},F_{2,\boldsymbol{w}}' \colon \sF_s^5 \to \sF_{s-1}^5$, $j \in \{ 1, \dotsc, 5 \}$, by
    \begin{align*}
        F_{1,\boldsymbol{w}}(A)_\alpha & := \left( \frac{n}{2} + w - 2k + \alpha_1 + 1 \right)A_{\alpha + \vec{e}_1} , \\
        F_{2,\boldsymbol{w}}(A)_\alpha & := \left( \frac{n}{2} + w_1 + w_2 - \alpha_2 - 2\alpha_3 - 2\alpha_4 -1 \right)A_{\alpha + \vec{e}_2} , \\
        F_{2,\boldsymbol{w}}'(A)_\alpha & := \left( \frac{n}{2} + w_1 + w_2 - \alpha_2 - 2\alpha_3 - 2\alpha_4 - 3 \right)A_{\alpha + \vec{e}_2} , \\
        F_{3,\boldsymbol{w}}(A)_\alpha & := \left( \frac{n}{2} + w_1 - \alpha_3 - 1 \right)A_{\alpha + \vec{e}_3} , \\
        F_{4,\boldsymbol{w}}(A)_\alpha & := \left( \frac{n}{2} + w_2 - \alpha_4 - 1 \right)A_{\alpha + \vec{e}_4} , \\
        F_{5,\boldsymbol{w}}(A)_\alpha & := \left( \frac{n}{2} + w_3 - \alpha_5 - 1 \right)A_{\alpha + \vec{e}_5} .
    \end{align*}
    It is straightforward to check that
    \begin{align*}
        [F_{i,\boldsymbol{w}} , F_{j,\boldsymbol{w}}] & = 0 , && \text{if $i,j \in \{ 1, 3, 4, 5 \}$} , \\
        [F_{i,\boldsymbol{w}} , F_{2,\boldsymbol{w}}] & = 0 , && \text{if $i \in \{ 1, 5 \}$} , \\
        F_{i,\boldsymbol{w}}F_{2,\boldsymbol{w}} & = F_{2,\boldsymbol{w}}'F_{i,\boldsymbol{w}} , && \text{if $i \in \{ 3, 4 \}$} .
    \end{align*}
    Therefore
    \begin{equation}
        \label{eqn:tridifferential-complex}
        0 \longrightarrow \sF_k^5 \overset{d_{1,\boldsymbol{w}}}{\longrightarrow} \bigl(\sF_{k-1}^5\bigr)^3 \overset{d_{2,\boldsymbol{w}}}{\longrightarrow} \bigl(\sF_{k-2}^5\bigr)^3 \overset{d_{3,\boldsymbol{w}}}{\longrightarrow} \sF_{k-3}^5 \longrightarrow 0
    \end{equation}
    is a complex for all $\boldsymbol{w} = (w_1,w_2,w_3) \in \bR^3$, where
    \begin{align*}
        d_{1,\boldsymbol{w}} & := \begin{pmatrix} F_{1,\boldsymbol{w}} + F_{2,\boldsymbol{w}} + F_{3,\boldsymbol{w}} \\ F_{1,\boldsymbol{w}} + F_{2,\boldsymbol{w}} + F_{4,\boldsymbol{w}} \\ F_{1,\boldsymbol{w}} + F_{5,\boldsymbol{w}} \end{pmatrix} , \\
        d_{2,\boldsymbol{w}} & := \begin{pmatrix} -F_{1,\boldsymbol{w}}-F_{5,\boldsymbol{w}} & F_{1,\boldsymbol{w}}+F_{5,\boldsymbol{w}} & F_{3,\boldsymbol{w}}-F_{4,\boldsymbol{w}} \\ F_{1,\boldsymbol{w}} + F_{5,\boldsymbol{w}} & 0 & -F_{1,\boldsymbol{w}}-F_{2,\boldsymbol{w}}-F_{3,\boldsymbol{w}} \\ -F_{1,\boldsymbol{w}} - F_{2,\boldsymbol{w}}'-F_{4,\boldsymbol{w}} & F_{1,\boldsymbol{w}} + F_{2,\boldsymbol{w}}' + F_{3,\boldsymbol{w}} & 0 \end{pmatrix} , \\
        d_{3,\boldsymbol{w}} & := \begin{pmatrix} F_{1,\boldsymbol{w}} + F_{2,\boldsymbol{w}}' + F_{3,\boldsymbol{w}} & F_{3,\boldsymbol{w}} - F_{4,\boldsymbol{w}} & -F_{1,\boldsymbol{w}} - F_{5,\boldsymbol{w}} \end{pmatrix} ,
    \end{align*}
    and elements of $(\sF_{k-1}^5)^3$ and $(\sF_{k-2}^5)^3$ are represented as column vectors.
    On the one hand, the elementary formula
    \begin{equation*}
        \lv \sI_k^\ell \rv = \binom{k+\ell-1}{\ell-1}
    \end{equation*}
    implies that the Euler characteristic of Chain Complex~\eqref{eqn:tridifferential-complex} is
    \begin{equation}
        \label{eqn:euler-characteristic1}
        \chi = \lv \sF_k^5 \rv - 3\lv \sF_{k-1}^5 \rv + 3\lv\sF_{k-2}^5 \rv - \lv\sF_{k-3}^5\rv = k + 1 .
    \end{equation}
    On the other hand, the previous paragraph implies that if $A \in \ker d_{1,\boldsymbol{w}}$, then Equation~\eqref{eqn:tridifferential-operator} is tangential.
    In the next paragraph, we prove that there is a dense set $W \subset \bR^3$ such that if $(w_1,w_2,w_3) \in W$, then
    \begin{equation}
        \label{eqn:defn-W}
        \ker d_{2,\boldsymbol{w}} = \im d_{1,\boldsymbol{w}} , \quad \ker d_{3,\boldsymbol{w}} = \im d_{2,\boldsymbol{w}} , \quad \sF_{k-3}^5 = \im d_{3,\boldsymbol{w}} .
    \end{equation}
    It follows that the Euler characteristic of Chain Complex~\eqref{eqn:tridifferential-complex} is
    \begin{equation}
        \label{eqn:euler-characteristic2}
        \chi = \dim \ker d_{1,\boldsymbol{w}}
    \end{equation}
    for all $\boldsymbol{w} \in W$.
    Since $\boldsymbol{w} \mapsto d_{1,\boldsymbol{w}}$ is continuous, $\boldsymbol{w} \mapsto \dim \ker d_{1,\boldsymbol{w}}$ is upper semicontinuous.
    Combining this with Equations~\eqref{eqn:euler-characteristic1} and~\eqref{eqn:euler-characteristic2} yields
    \begin{equation*}
        \dim \ker d_{1,\boldsymbol{w}} \geq k+1
    \end{equation*}
    for all $\boldsymbol{w} \in \bR^3$.

    Consider the set
    \begin{equation*}
        W := \left\{ (w_1,w_2,w_3) \in \bR^3 \suchthat 2w_1 , 2w_2, 2w_3 \not\in \bZ \right\} .
    \end{equation*}
    Clearly $W \subset \bR^3$ is dense;
    we will show that Conditions~\eqref{eqn:defn-W} hold for all $\boldsymbol{w} \in W$.
    Let $\boldsymbol{w} \in W$.
    Then $F_{j,\boldsymbol{w}},F_{2,\boldsymbol{w}}' \colon \sF_s^5 \to \sF_{s-1}^5$ is invertible for each $j \in \{ 1, \dotsc, 5 \}$ and each $s \in \bN$.
    Let $G_{j,\boldsymbol{w}},G_{2,\boldsymbol{w}}' \colon \sF_{s-1}^5 \to \sF_s^5$ denote the right inverses for which $(G_{j,\boldsymbol{w}}'A)_\alpha = 0$ and $(G_{2,\boldsymbol{w}}'A)_\alpha=0$ whenever $\alpha_j=0$ and $\alpha_2=0$, respectively.
    It follows that
    \begin{align*}
        [G_{i,\boldsymbol{w}} , G_{j,\boldsymbol{w}}] & = 0 , && \text{if $i,j \in \{ 1, 3, 4, 5 \}$} , \\
        [G_{i,\boldsymbol{w}} , G_{2,\boldsymbol{w}}] & = 0 , && \text{if $i \in \{ 1, 5 \}$} , \\
        G_{i,\boldsymbol{w}}G_{2,\boldsymbol{w}}' & = G_{2,\boldsymbol{w}}G_{i,\boldsymbol{w}} , && \text{if $i \in \{ 3, 4 \}$} .
    \end{align*}
    We also have $F_iG_2 = G_2'F_i$ and $G_iF_2'=F_2G_i$ for $i \in \{ 3, 4 \}$, and all other pairs $F_i,G_j$ with $i\not=j$ commute.
    Therefore the operators appearing in the components of the matrices defining $d_{j,\boldsymbol{w}}$ are all invertible;
    e.g.
    \begin{align*}
        (F_{1,\boldsymbol{w}} + F_{5,\boldsymbol{w}})^{-1} & = \sum_{j=0}^\infty (-1)^j G_{5,\boldsymbol{w}}(F_{1,\boldsymbol{w}}G_{5,\boldsymbol{w}})^{j} , \\
        (F_{1,\boldsymbol{w}} + F_{2,\boldsymbol{w}} + F_{3,\boldsymbol{w}})^{-1} & = \sum_{j=0}^\infty (-1)^jG_{1,\boldsymbol{w}}\bigl((F_{2,\boldsymbol{w}}+F_{3,\boldsymbol{w}})G_{1,\boldsymbol{w}}\bigr)^j .
    \end{align*}
    It follows immediately that $d_{3,\boldsymbol{w}}$ is surjective.
    We readily check that
    \begin{align*}
        (F_1+F_5)(F_3-F_4)^{-1} & = (F_3-F_4)^{-1}(F_1+F_5) , \\
        (F_1+F_2+F_3)(F_3-F_4)^{-1} & = (F_3-F_4)^{-1}(F_1+F_2'+F_3) , \\
        (F_3-F_4)(F_1+F_2+F_3)^{-1} & = (F_1+F_2'+F_3)^{-1}(F_3-F_4) .
    \end{align*}
    Direct computation implies that if $(C_1,C_2,C_3) \in \ker d_{3,\boldsymbol{w}}$, then
    \begin{equation*}
        d_{2,\boldsymbol{w}}\begin{pmatrix} (F_3-F_4)^{-1}C_3 \\ (F_3-F_4)^{-1}C_3 \\ (F_3-F_4)^{-1}C_1 - (F_1+F_2+F_3)^{-1}T_1C_2 \end{pmatrix} = \begin{pmatrix} C_1 \\ C_2 \\ C_3 \end{pmatrix} ,
    \end{equation*}
    where $T_1 := I - (F_3-F_4)^{-1}(F_3-F_4)$.
    Direct computation also implies that if $(B_1,B_2,B_3) \in \ker d_{2,\boldsymbol{w}}$, then
    \begin{equation*}
        A := (F_1+F_5)^{-1}B_3 + (F_3-F_4)^{-1}T_2(B_1-B_2) + (F_1+F_2+F_3)^{-1}T_1T_2B_1
    \end{equation*}
    is such that $d_{1,\boldsymbol{w}}A = (B_1,B_2,B_3)$, where $T_2 := I - (F_1+F_5)^{-1}(F_1+F_5)$.
    Thus $W$ has the desired properties.
\end{proof}

We conclude this section by adapting the above proof to construct some new conformally invariant differential and bidifferential operators.

We first construct a family of differential operators that is more general than those of Case, Lin, and Yuan~\cite{CaseLinYuan2022or}:

\begin{proposition}
    \label{linear-analogue}
    Fix integers $n \geq 3$ and $k,\ell_1,\ell_2 \geq 0$ such that $k \geq \ell := \ell_1+\ell_2$;
    if $n$ is even, then assume additionally that $n \geq 2k$.
    Fix $w \in \bR$ and let $A \in \sF_{k-\ell}^3$ be a solution of the recurrence relation
    \begin{multline}
        \label{eqn:linear-analogue-recurrence}
        0 = \left( \frac{n}{2} + w - \alpha_3 - 1 \right)A_{\alpha+\vec{e}_3} + \left( \frac{n}{2} + w - 2k + \alpha_1 + 1 \right)A_{\alpha+\vec{e}_1} \\
         + \left( \frac{n}{2} + w - 2\ell_2 - \alpha_2 - 2\alpha_3 - 1 \right)A_{\alpha+\vec{e}_2} .
    \end{multline}
    If $\cI_1,\cI_2$ are scalar Riemannian invariants of weight $-2\ell_1$ and $-2\ell_2$, respectively, on $(n+2)$-manifolds, then
    \begin{equation}
        \label{eqn:linear-analogue}
        \cD(\cu ) := \sum_{\alpha \in \sI_{k-\ell}^3} \binom{k-\ell}{\alpha}A_\alpha \cDelta^{\alpha_1}\bigl( \cI_1 \cDelta^{\alpha_2}( \cI_2\cDelta^{\alpha_3}\cu ) \bigr)
    \end{equation}
    is a tangential differential operator on $\cmE[w]$.
    Moreover, the space of solutions $A$ of the recurrence relation~\eqref{eqn:linear-analogue-recurrence} is at least $(k-\ell+1)$-dimensional.
\end{proposition}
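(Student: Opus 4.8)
The plan is to run the argument of \cref{tridifferential} in the rank-$2$ case. Define $\cD$ by Equation~\eqref{eqn:linear-analogue}. Since $\cDelta$, $\cI_1$, and $\cI_2$ have homogeneities $-2$, $-2\ell_1$, and $-2\ell_2$ respectively, $\cD$ is an ambient polydifferential operator of rank $2$ and homogeneity $-2k$ on $n$-manifolds, and it maps $\cmE[w]$ to $\cmE[w-2k]$. By \cref{what-to-check-for-tangential} it therefore suffices to show that $[\cD,\cQ]=0$ on $\cmE[w-2]$; there is only one commutator to check since $r=2$.

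First I would expand $[\cD,\cQ]$ using the product rule for commutators. Since multiplication by the scalars $\cI_1$ and $\cI_2$ commutes with multiplication by $\cQ$, this produces three families of terms, involving $[\cDelta^{\alpha_1},\cQ]$, $[\cDelta^{\alpha_2},\cQ]$, and $[\cDelta^{\alpha_3},\cQ]$. Applying Equation~\eqref{eqn:fundamental-commutator} and evaluating the factor $2\cX+n+4-2\alpha_j$ on the relevant homogeneous space --- being careful that $\cI_2\cDelta^{\alpha_3}\cu$ lies in $\cmE[w-2-2\alpha_3-2\ell_2]$ and that $\cI_1\cDelta^{\alpha_2}(\cI_2\cDelta^{\alpha_3}\cu)$ lies in $\cmE[w-2-2\alpha_2-2\alpha_3-2\ell]$ --- and then reindexing by $\alpha\mapsto\alpha-\vec{e}_j$ exactly as in the proof of \cref{tridifferential}, I expect to obtain, for $k>\ell$,
\begin{equation*}
 [\cD,\cQ]\cu = -4(k-\ell)\sum_{\alpha\in\sI_{k-\ell-1}^3}\binom{k-\ell-1}{\alpha}\bigl(F_{1,w}A+F_{2,w}A+F_{3,w}A\bigr)_\alpha\,\cDelta^{\alpha_1}\bigl(\cI_1\cDelta^{\alpha_2}(\cI_2\cDelta^{\alpha_3}\cu)\bigr),
\end{equation*}
where $F_{j,w}\colon\sF_s^3\to\sF_{s-1}^3$ are given by
\begin{align*}
 F_{1,w}(A)_\alpha &:= \left(\tfrac n2+w-2k+\alpha_1+1\right)A_{\alpha+\vec{e}_1}, \\
 F_{2,w}(A)_\alpha &:= \left(\tfrac n2+w-2\ell_2-\alpha_2-2\alpha_3-1\right)A_{\alpha+\vec{e}_2}, \\
 F_{3,w}(A)_\alpha &:= \left(\tfrac n2+w-\alpha_3-1\right)A_{\alpha+\vec{e}_3};
\end{align*}
the case $k=\ell$ is immediate because $\cD$ is then multiplication by $A_{(0,0,0)}\cI_1\cI_2$. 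Setting $d_{1,w}:=F_{1,w}+F_{2,w}+F_{3,w}\colon\sF_{k-\ell}^3\to\sF_{k-\ell-1}^3$, the vanishing of $[\cD,\cQ]$ is equivalent to $d_{1,w}A=0$, which is precisely the recurrence relation~\eqref{eqn:linear-analogue-recurrence}. This establishes tangentiality.

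For the dimension estimate, the elementary identity $\lv\sI_m^3\rv=\binom{m+2}{2}$ gives $\dim\sF_{k-\ell}^3-\dim\sF_{k-\ell-1}^3=k-\ell+1$, so it is enough to show that $d_{1,w}$ is surjective for $w$ in a dense subset $W\subset\bR$ and then appeal to upper semicontinuity of $w\mapsto\dim\ker d_{1,w}$, just as in the proof of \cref{tridifferential}. Let $W$ be the complement of the finite set of weights at which one of the coefficients appearing in $F_{1,w}$, $F_{2,w}$, $F_{3,w}$ (in degrees $\le k-\ell$) vanishes; then for $w\in W$ each $F_{j,w}$ is surjective and admits the injective right inverse $G_{j,w}$ which annihilates the components indexed by $\alpha$ with $\alpha_j=0$. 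One then writes $\sum_{j\ge0}(-1)^jG_{1,w}\bigl((F_{2,w}+F_{3,w})G_{1,w}\bigr)^j$ as a right inverse of $d_{1,w}$; this sum is finite because $(F_{2,w}+F_{3,w})G_{1,w}$ strictly decreases the largest value of $\alpha_2+\alpha_3$ occurring in the support of its argument. Hence $\dim\ker d_{1,w}=k-\ell+1$ for $w\in W$, and upper semicontinuity gives $\dim\ker d_{1,w}\ge k-\ell+1$ for all $w\in\bR$.

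All of these steps are routine analogues of the rank-$4$ argument, and I expect the only delicate point to be the first: keeping track of the homogeneities of $\cI_2\cDelta^{\alpha_3}\cu$ and $\cI_1\cDelta^{\alpha_2}(\cI_2\cDelta^{\alpha_3}\cu)$ when applying Equation~\eqref{eqn:fundamental-commutator}, since it is these shifted weights --- not merely $w$ --- that enter the coefficients of $F_{2,w}$ and $F_{1,w}$ and hence the recurrence~\eqref{eqn:linear-analogue-recurrence}.
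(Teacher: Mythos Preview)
Your proof is correct and follows essentially the same approach as the paper's: compute $[\cD,\cQ]$ via the product rule and Equation~\eqref{eqn:fundamental-commutator}, identify the coefficient of the reindexed sum with $d_{1,w}A$, and estimate $\dim\ker d_{1,w}$ by computing the Euler characteristic of the two-term complex $0\to\sF_{k-\ell}^3\to\sF_{k-\ell-1}^3\to0$ and showing $d_{1,w}$ is surjective for generic $w$. Your treatment of surjectivity via the explicit Neumann series $\sum_{j\ge0}(-1)^jG_{1,w}\bigl((F_{2,w}+F_{3,w})G_{1,w}\bigr)^j$ is the same device the paper uses in the proof of \cref{tridifferential}, and your nilpotence argument is fine. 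One small overstatement: the vanishing of $[\cD,\cQ]$ is \emph{implied by} $d_{1,w}A=0$, not equivalent to it (the operators $\cDelta^{\alpha_1}\bigl(\cI_1\cDelta^{\alpha_2}(\cI_2\cDelta^{\alpha_3}\,\cdot\,)\bigr)$ need not be linearly independent, e.g.\ if one of the $\cI_j$ vanishes), but you only use the forward implication, so this does not affect the argument.
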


\begin{proof}
    Define $\cD$ by Equation~\eqref{eqn:linear-analogue}.
    Direct computation yields
    \begin{align*}
        \bigl[ \cD , \cQ \bigr](\cu) & = -4(k-\ell)\sum_{\alpha \in \sI_{k-\ell-1}^3} \binom{k-\ell-1}{\alpha}B_\alpha \cDelta^{\alpha_1}\bigl( \cI_1 \cDelta^{\alpha_2}( \cI_2\cDelta^{\alpha_3}\cu ) \bigr) , \\
        B_\alpha & = \left( \frac{n}{2} + w - \alpha_3 - 1 \right)A_{\alpha+\vec{e}_3} + \left( \frac{n}{2} + w - 2k + \alpha_1 + 1 \right)A_{\alpha+\vec{e}_1} \\
         & \quad + \left( \frac{n}{2} + w - 2\ell_2 - 2\alpha_3 - \alpha_2 - 1 \right)A_{\alpha+\vec{e}_2}
    \end{align*}
    on $\cmE[w-2]$.
    It follows immediately from \cref{what-to-check-for-tangential} and our assumption on $A$ that $\cD$ is tangential on $\cmE[w]$.

    Given $w \in \bR$, define $F_{j,w} \colon \sF_s^3 \to \sF_{s-1}^3$, $j \in \{ 1, 2, 3 \}$ and $s \in \bN$, by
    \begin{align*}
        F_{1,w}(A)_\alpha & := \left(\frac{n}{2} + w - 2k + \alpha_1 + 1 \right)A_{\alpha + \vec{e}_1} , \\
        F_{2,w}(A)_{\alpha} & := \left( \frac{n}{2} + w - 2\ell_2 - \alpha_2 - 2\alpha_3 - 1 \right)A_{\alpha + \vec{e}_2} , \\
        F_{3,w}(A)_{\alpha} & := \left( \frac{n}{2}+w-\alpha_3-1 \right)A_{\alpha + \vec{e}_3} .
    \end{align*}
    Clearly
    \begin{equation*}
        0 \longrightarrow \sF_k^3 \overset{d_{1,w}}{\longrightarrow} \sF_{k-1}^3 \longrightarrow 0
    \end{equation*}
    is a chain complex, where $d_{1,w} := F_{1,w} + F_{2,w} + F_{3,w}$.
    Since each element of $\ker d_{1,w}$ gives a solution of the recurrence relation~\eqref{eqn:linear-analogue-recurrence}, an argument as in the proof of \cref{tridifferential} implies that
    \begin{equation*}
        \dim \ker d_{1,w} \geq \lv \sF_{k-\ell}^3 \rv - \lv \sF_{k-\ell-1}^3 \rv = k-\ell+1 . \qedhere
    \end{equation*}
\end{proof}

We now construct three different families of conformally invariant bidifferential operators, each of which exhibits the significant lack of uniqueness for the curved Ovsienko--Redou operators.

Our first bidifferential operators most closely resemble the ambient formula~\eqref{eqn:ovsienko-redou} for the curved Ovsienko--Redou operators:

\begin{proposition}
    \label{outer-ovsienko-redou}
    Fix integers $k \geq \ell \geq 0$ and $n \geq 3$;
    if $n$ is even, then assume additionally that $n \geq 2k$.
    Fix $w_1,w_2 \in \bR$ and let $A \in \sF_{k-\ell}^3$ be a solution of the recurrence relations
    \begin{equation}
        \label{eqn:outer-ovsienko-redou-recurrence}
        \begin{split}
            \left( \frac{n}{2} + w_1 - \alpha_2 - 1 \right)A_{\alpha + \vec{e}_2} & = -\left( \frac{n}{2} + w - 2k + \alpha_1 + 1 \right) A_{\alpha + \vec{e}_1} , \\
            \left( \frac{n}{2} + w_2 - \alpha_3 - 1 \right)A_{\alpha + \vec{e}_3} & = -\left( \frac{n}{2} + w - 2k + \alpha_1 + 1 \right) A_{\alpha+\vec{e}_1} ,
        \end{split}
    \end{equation}
    for $w := w_1 + w_2$.
    If $\cI$ is a scalar Riemannian invariant of weight $-2\ell$ on $(n+2)$-manifolds, then
    \begin{equation}
        \label{eqn:outer-ovsienko-redou}
        \cD(\cu \otimes \cv) := \sum_{\alpha\in \sI_{k-\ell}^3} \binom{k-\ell}{\alpha}A_{\alpha}\cDelta^{\alpha_1}\left( \cI \bigl( \cDelta^{\alpha_2}\cu \bigr) \bigl( \cDelta^{\alpha_3}\cv \bigr) \right)
    \end{equation}
    is a tangential ambient bidifferential operator on $\cmE[w_1] \otimes \cmE[w_2]$.
    Moreover, the space of solutions of the recurrence relations~\eqref{eqn:outer-ovsienko-redou-recurrence} is at least $1$-dimensional.
\end{proposition}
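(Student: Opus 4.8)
The plan is to follow the strategy used for \cref{tridifferential} and \cref{linear-analogue} almost verbatim: reduce tangentiality to the vanishing of the commutators with $\cQ$, identify that vanishing with the recurrence relations~\eqref{eqn:outer-ovsienko-redou-recurrence}, and then bound the number of solutions of those relations by realizing them as the kernel of the first map in a short chain complex whose Euler characteristic we can compute. For the tangentiality assertion, define $\cD$ by Equation~\eqref{eqn:outer-ovsienko-redou}; since $\cI$ has weight $-2\ell$ and $\cD$ is built from $\cI$, the ambient Laplacian, and metric contractions, $\cD$ is an ambient bidifferential operator of homogeneity $-2(k-\ell)-2\ell=-2k$. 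By \cref{what-to-check-for-tangential} it suffices to show $[\cD,\cQ]_1=0$ on $\cmE[w_1-2]\otimes\cmE[w_2]$ and $[\cD,\cQ]_2=0$ on $\cmE[w_1]\otimes\cmE[w_2-2]$. Working in a straight ambient space, I would compute these by the product rule for commutators; since $\cQ$ commutes with multiplication by the function $\cI$, only the commutators $[\cDelta^{\alpha_1},\cQ]$ and $[\cDelta^{\alpha_j},\cQ]$ (with $j=2$ for the $\cu$-slot, $j=3$ for the $\cv$-slot) contribute, giving just two terms per commutator rather than the three occurring in \cref{tridifferential}.

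Inserting Equation~\eqref{eqn:fundamental-commutator}, evaluating $\cX$ on each homogeneous factor, using $\alpha_1+\alpha_2+\alpha_3=k-\ell$ to collapse the factor produced by $[\cDelta^{\alpha_1},\cQ]$ to one depending on $\alpha_1$ alone, and reindexing the sums by $\alpha\mapsto\alpha-\vec{e}_j$ (so that $\alpha_j\binom{k-\ell}{\alpha}=(k-\ell)\binom{k-\ell-1}{\alpha-\vec{e}_j}$), I expect
\begin{equation*}
    [\cD,\cQ]_1(\cu\otimes\cv)=-4(k-\ell)\sum_{\alpha\in\sI_{k-\ell-1}^3}\binom{k-\ell-1}{\alpha}B_\alpha^{(1)}\,\cDelta^{\alpha_1}\bigl(\cI\,(\cDelta^{\alpha_2}\cu)(\cDelta^{\alpha_3}\cv)\bigr),
\end{equation*}
with $B_\alpha^{(1)}=\bigl(\tfrac{n}{2}+w_1-\alpha_2-1\bigr)A_{\alpha+\vec{e}_2}+\bigl(\tfrac{n}{2}+w-2k+\alpha_1+1\bigr)A_{\alpha+\vec{e}_1}$, and the analogous formula for $[\cD,\cQ]_2$ with $B_\alpha^{(2)}=\bigl(\tfrac{n}{2}+w_2-\alpha_3-1\bigr)A_{\alpha+\vec{e}_3}+\bigl(\tfrac{n}{2}+w-2k+\alpha_1+1\bigr)A_{\alpha+\vec{e}_1}$. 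The hypothesis that $A$ solves~\eqref{eqn:outer-ovsienko-redou-recurrence} is exactly the statement $B^{(1)}=B^{(2)}=0$, so $\cD$ is tangential by \cref{what-to-check-for-tangential}.

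For the dimension bound, put $m:=k-\ell$ and, for $\boldsymbol{w}=(w_1,w_2)\in\bR^2$, define $F_{1,\boldsymbol{w}},F_{2,\boldsymbol{w}},F_{3,\boldsymbol{w}}\colon\sF_s^3\to\sF_{s-1}^3$ by
\begin{align*}
    F_{1,\boldsymbol{w}}(A)_\alpha &:= \bigl(\tfrac{n}{2}+w-2k+\alpha_1+1\bigr)A_{\alpha+\vec{e}_1},\\
    F_{2,\boldsymbol{w}}(A)_\alpha &:= \bigl(\tfrac{n}{2}+w_1-\alpha_2-1\bigr)A_{\alpha+\vec{e}_2},\\
    F_{3,\boldsymbol{w}}(A)_\alpha &:= \bigl(\tfrac{n}{2}+w_2-\alpha_3-1\bigr)A_{\alpha+\vec{e}_3},
\end{align*}
so that~\eqref{eqn:outer-ovsienko-redou-recurrence} says precisely that $A\in\ker d_{1,\boldsymbol{w}}$, where $d_{1,\boldsymbol{w}}\colon\sF_m^3\to(\sF_{m-1}^3)^2$ has components $F_{1,\boldsymbol{w}}+F_{2,\boldsymbol{w}}$ and $F_{1,\boldsymbol{w}}+F_{3,\boldsymbol{w}}$. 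Each $F_{j,\boldsymbol{w}}$ is a weighted shift in the single coordinate $\alpha_j$ whose weight depends on $\alpha_j$ only, so the three pairwise commute, and therefore
\begin{equation*}
    0\longrightarrow\sF_m^3\overset{d_{1,\boldsymbol{w}}}{\longrightarrow}(\sF_{m-1}^3)^2\overset{d_{2,\boldsymbol{w}}}{\longrightarrow}\sF_{m-2}^3\longrightarrow0,\qquad d_{2,\boldsymbol{w}}:=\bigl(-(F_{1,\boldsymbol{w}}+F_{3,\boldsymbol{w}}),\,F_{1,\boldsymbol{w}}+F_{2,\boldsymbol{w}}\bigr),
\end{equation*}
is a complex for every $\boldsymbol{w}$; since $\lv\sI_s^3\rv=\binom{s+2}{2}$, its Euler characteristic is $\lv\sF_m^3\rv-2\lv\sF_{m-1}^3\rv+\lv\sF_{m-2}^3\rv=1$. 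On the dense set $W\subset\bR^2$ where none of the (affine in $\alpha$) coefficients of $F_{1,\boldsymbol{w}},F_{2,\boldsymbol{w}},F_{3,\boldsymbol{w}}$ vanishes --- e.g.\ $2w_1,2w_2,2(w_1+w_2)\notin\bZ$ --- each $F_{j,\boldsymbol{w}}$ has a canonical right inverse $G_{j,\boldsymbol{w}}$ (vanishing on tuples with $\alpha_j=0$), the resulting $F$'s and $G$'s pairwise commute, and the attendant Neumann series --- e.g.\ $(F_{1,\boldsymbol{w}}+F_{2,\boldsymbol{w}})^{-1}=\sum_{j\ge0}(-1)^jG_{2,\boldsymbol{w}}(F_{1,\boldsymbol{w}}G_{2,\boldsymbol{w}})^j$, a finite sum since $F_{1,\boldsymbol{w}}G_{2,\boldsymbol{w}}$ is nilpotent --- show $F_{1,\boldsymbol{w}}+F_{2,\boldsymbol{w}}$ is surjective, hence $d_{2,\boldsymbol{w}}$ is surjective. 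From $d_{2,\boldsymbol{w}}d_{1,\boldsymbol{w}}=0$ we get $\im d_{1,\boldsymbol{w}}\subseteq\ker d_{2,\boldsymbol{w}}$, so surjectivity of $d_{2,\boldsymbol{w}}$ gives $\dim\ker d_{1,\boldsymbol{w}}\ge\lv\sF_m^3\rv-\dim\ker d_{2,\boldsymbol{w}}=\lv\sF_m^3\rv-2\lv\sF_{m-1}^3\rv+\lv\sF_{m-2}^3\rv=1$ for $\boldsymbol{w}\in W$; by upper semicontinuity of $\boldsymbol{w}\mapsto\dim\ker d_{1,\boldsymbol{w}}$ this inequality persists for every $\boldsymbol{w}\in\bR^2$, which is the claimed lower bound on the solution space of~\eqref{eqn:outer-ovsienko-redou-recurrence}.

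The main obstacle is twofold. The commutator computation is lengthy but mechanical; the place to be careful is the bookkeeping of homogeneity factors, in particular confirming that, after applying $\alpha_1+\alpha_2+\alpha_3=k-\ell$, the $\cDelta^{\alpha_1}$-term contributes the weight factor $\tfrac{n}{2}+w-2k+\alpha_1+1$ shared by both recurrences. For the dimension bound, the genuine point --- exactly as in \cref{tridifferential} --- is the surjectivity of $d_{2,\boldsymbol{w}}$ on a dense set, which rests on the Neumann-series inversion of the sums $F_{i,\boldsymbol{w}}+F_{j,\boldsymbol{w}}$ and is where the genericity hypothesis on $\boldsymbol{w}$ is used. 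Here the situation is simpler than in \cref{tridifferential} on two counts: we need only the lower bound $\dim\ker d_{1,\boldsymbol{w}}\ge1$, so full exactness of the complex away from $\sF_m^3$ is unnecessary (surjectivity of $d_{2,\boldsymbol{w}}$ alone suffices), and because the weight factor $\tfrac{n}{2}+w_1-\alpha_2-1$ depends on $\alpha_2$ only, no auxiliary operator $F_{2,\boldsymbol{w}}'$ is needed.
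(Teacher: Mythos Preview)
Your proposal is correct and follows essentially the same approach as the paper: compute the commutators $[\cD,\cQ]_j$ using the product rule and Equation~\eqref{eqn:fundamental-commutator} to obtain the $B^{(j)}$, identify their vanishing with the recurrence relations, then set up the three-term chain complex with the operators $F_{j,\boldsymbol{w}}$ and invoke the Euler characteristic together with the generic-$\boldsymbol{w}$ argument and upper semicontinuity. The paper writes $d_{2,\boldsymbol{w}}$ with the opposite overall sign, which is of course immaterial, and simply defers to ``an argument as in the proof of \cref{tridifferential}'' for the dimension bound; your observation that surjectivity of $d_{2,\boldsymbol{w}}$ alone already forces $\dim\ker d_{1,\boldsymbol{w}}\ge 1$ (without verifying exactness at the middle term) is a small but valid shortcut over what a literal transcription of that proof would require.
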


\begin{proof}
    Define $\cD$ by Equation~\eqref{eqn:outer-ovsienko-redou}.
    Direct computation yields
    \begin{equation*}
        \bigl[ \cD , \cQ \bigr]_j = -4(k-\ell)\sum_{\alpha\in\sI_{k-\ell-1}^3} \binom{k-\ell-1}{\alpha} B_\alpha^{(j)} \cDelta^{\alpha_1} \left( \cI \bigl( \cDelta^{\alpha_2}\cu \bigr) \bigl( \cDelta^{\alpha_3}\cu \bigr) \right)
    \end{equation*}
    on $\cmE[w_1-2] \otimes \cmE[w_2]$ when $j=1$, and on $\cmE[w_1] \otimes \cmE[w_2-2]$ when $j=2$, where
    \begin{align*}
        B_\alpha^{(1)} & = \left( \frac{n}{2} + w_1 - \alpha_2 - 1 \right)A_{\alpha+\vec{e}_2} + \left( \frac{n}{2} + w - 2k + \alpha_1 + 1 \right) A_{\alpha+\vec{e}_1} , \\
        B_\alpha^{(2)} & = \left( \frac{n}{2} + w_2 - \alpha_3 - 1 \right)A_{\alpha+\vec{e}_3} + \left( \frac{n}{2} + w - 2k + \alpha_1 + 1 \right) A_{\alpha+\vec{e}_1} .
    \end{align*}
    It follows immediately from \cref{what-to-check-for-tangential} and our assumptions on $A$ that $\cD$ is tangential on $\cmE[w_1] \otimes \cmE[w_2]$.

    Given $\boldsymbol{w} \in \bR^2$, define $F_{j,\boldsymbol{w}} \colon \sF_s^3 \to \sF_{s-1}^3$, $j \in \{ 1, 2, 3 \}$ and $s \in \bN$, by
    \begin{align*}
        F_{1,\boldsymbol{w}}(A)_\alpha & := \left(\frac{n}{2} + w - 2k + \alpha_1 + 1 \right)A_{\alpha + \vec{e}_1} , \\
        F_{2,\boldsymbol{w}}(A)_\alpha & := \left(\frac{n}{2} + w_1 - \alpha_2 - 1 \right)A_{\alpha + \vec{e}_2} , \\
        F_{3,\boldsymbol{w}}(A)_{\alpha} & := \left( \frac{n}{2} + w_2 - \alpha_3 - 1 \right)A_{\alpha + \vec{e}_3} .
    \end{align*}
    Direct computation implies that
    \begin{equation*}
        0 \longrightarrow \sF_{k-\ell}^3 \overset{d_{1,\boldsymbol{w}}}{\longrightarrow} \bigl(\sF_{k-\ell-1}^3\bigr)^2 \overset{d_{2,\boldsymbol{w}}}{\longrightarrow} \sF_{k-\ell-2}^3 \longrightarrow 0
    \end{equation*}
    is a chain complex, where
    \begin{align*}
        d_{1,\boldsymbol{w}} & := \begin{pmatrix} F_{1,\boldsymbol{w}} + F_{2,\boldsymbol{w}} & F_{1,\boldsymbol{w}} + F_{3,\boldsymbol{w}} \end{pmatrix}^T , \\
        d_{2,\boldsymbol{w}} & := \begin{pmatrix} F_{1,\boldsymbol{w}} + F_{3,\boldsymbol{w}} & -( F_{1,\boldsymbol{w}} + F_{2,\boldsymbol{w}}) \end{pmatrix} .
    \end{align*}
    Since each element of $\ker d_{1,\boldsymbol{w}}$ gives a solution of the recurrence relation~\eqref{eqn:outer-ovsienko-redou-recurrence}, an argument as in the proof of Theorem~\ref{tridifferential} implies that
    \begin{equation*}
        \ker \dim d_{1,\boldsymbol{w}} \geq \lv\sF_{k-\ell}^3\rv - 2\lv\sF_{k-\ell-1}^3\rv + \lv\sF_{k-\ell-2}^3\rv = 1 . \qedhere
    \end{equation*}
\end{proof}

Our second bidifferential operator modifies a curved Ovsienko--Redou operator by replacing one of the inner Laplacians by a sum $\sum \cDelta^{\alpha_1} \circ I \circ \cDelta^{\alpha_2}$:

\begin{proposition}
    \label{inner-ovsienko-redou}
    Fix integers $k \geq \ell \geq 0$ and $n \geq 3$;
    if $n$ is even, then assume additionally that $n \geq 2k$.
    Fix $w_1,w_2 \in \bR$ and let $A \in \sF_{k-\ell}^4$ be a solution of the recurrence relations
    \begin{equation}
        \label{eqn:inner-ovsienko-redou-recurrence}
        \begin{split}
            \MoveEqLeft[10] \left( \frac{n}{2}+w_1 - \alpha_2-1 \right)A_{\alpha+\vec{e}_2} = -\left( \frac{n}{2} + w - 2k + \alpha_1 + 1 \right)A_{\alpha+\vec{e}_1} , \\
            \MoveEqLeft[10] \left( \frac{n}{2}+w_2-\alpha_4-1 \right)A_{\alpha+\vec{e}_4} = -\left( \frac{n}{2} + w - 2k + \alpha_1 + 1 \right)A_{\alpha+\vec{e}_1} \\
             & - \left(\frac{n}{2} + w_2 - 2\ell - \alpha_3 - 2\alpha_4 - 1 \right)A_{\alpha+\vec{e}_3} ,
        \end{split}
    \end{equation}
    for $w := w_1 + w_2$.
    If $\cI$ is a scalar Riemannian invariant of weight $-2\ell$ on $(n+2)$-manifolds, then
    \begin{equation}
        \label{eqn:inner-ovsienko-redou}
        \cD(\cu \otimes \cv) := \sum_{\alpha\in\sI_{k-\ell}^4} \binom{k-\ell}{\alpha} A_\alpha \cDelta^{\alpha_1} \left( \bigl( \cDelta^{\alpha_2}\cu \bigr) \bigl( \cDelta^{\alpha_3}( \cI\cDelta^{\alpha_4}\cv) \bigr) \right)
    \end{equation}
    is a tangential ambient bidifferential operator on $\cmE[w_1] \otimes \cmE[w_2]$.
    Moreover, the space of solutions of the recurrence relations~\eqref{eqn:inner-ovsienko-redou-recurrence} is at least $(k-\ell+1)$-dimensional.
\end{proposition}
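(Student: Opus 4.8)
The plan is to follow the template of the proofs of \cref{tridifferential}, \cref{linear-analogue}, and \cref{outer-ovsienko-redou}. First I would define $\cD$ by Equation~\eqref{eqn:inner-ovsienko-redou}; since $\cD$ has homogeneity $-2k$ (the $k-\ell$ Laplacians contribute $-2(k-\ell)$ and the invariant $\cI$ contributes $-2\ell$), \cref{what-to-check-for-tangential} applies and reduces tangentiality to the vanishing of the commutators $[\cD,\cQ]_1$ on $\cmE[w_1-2]\otimes\cmE[w_2]$ and $[\cD,\cQ]_2$ on $\cmE[w_1]\otimes\cmE[w_2-2]$. Expanding these with the product rule for commutators and Equation~\eqref{eqn:fundamental-commutator}, I would push $\cQ$ past each factor $\cDelta^{\alpha_j}$, past the scalar invariant $\cI$ (which is carried along but shifts by $-2\ell$ the weight of what it multiplies), and past the pointwise product; since $\cX$ acts by the weight on each homogeneous factor, each crossing of a $\cDelta^{\alpha_j}$ by $\cQ$ produces the expected correction linear in $\alpha_j$. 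Collecting terms and reindexing $\alpha\mapsto\alpha+\vec e_j$ via $\binom{k-\ell}{\alpha+\vec e_j}(\alpha_j+1)=(k-\ell)\binom{k-\ell-1}{\alpha}$, I expect to obtain
\begin{equation*}
    [\cD,\cQ]_j(\cu\otimes\cv)=-4(k-\ell)\sum_{\alpha\in\sI_{k-\ell-1}^4}\binom{k-\ell-1}{\alpha}B_\alpha^{(j)}\,\cDelta^{\alpha_1}\Bigl((\cDelta^{\alpha_2}\cu)\bigl(\cDelta^{\alpha_3}(\cI\cDelta^{\alpha_4}\cv)\bigr)\Bigr)
\end{equation*}
for $j\in\{1,2\}$, where $B^{(1)}$ and $B^{(2)}$ vanish precisely when $A$ satisfies, respectively, the first and second recurrence relations in~\eqref{eqn:inner-ovsienko-redou-recurrence}. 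The hypothesis on $A$ then forces $B^{(1)},B^{(2)}=0$, and \cref{what-to-check-for-tangential} yields tangentiality.

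For the dimension estimate, put $m:=k-\ell$ and, for $\boldsymbol w=(w_1,w_2)\in\bR^2$, introduce operators $F_{j,\boldsymbol w}\colon\sF_s^4\to\sF_{s-1}^4$, $j\in\{1,2,3,4\}$ and $s\in\bN$, by
\begin{align*}
    F_{1,\boldsymbol w}(A)_\alpha&:=\bigl(\tfrac n2+w-2k+\alpha_1+1\bigr)A_{\alpha+\vec e_1}, & F_{2,\boldsymbol w}(A)_\alpha&:=\bigl(\tfrac n2+w_1-\alpha_2-1\bigr)A_{\alpha+\vec e_2}, \\
    F_{3,\boldsymbol w}(A)_\alpha&:=\bigl(\tfrac n2+w_2-2\ell-\alpha_3-2\alpha_4-1\bigr)A_{\alpha+\vec e_3}, & F_{4,\boldsymbol w}(A)_\alpha&:=\bigl(\tfrac n2+w_2-\alpha_4-1\bigr)A_{\alpha+\vec e_4},
\end{align*}
together with the shifted variant $F_{3,\boldsymbol w}'(A)_\alpha:=(\tfrac n2+w_2-2\ell-\alpha_3-2\alpha_4-3)A_{\alpha+\vec e_3}$, which is forced in because the coefficient of $F_{3,\boldsymbol w}$ depends on the unshifted index $\alpha_4$. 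A short check gives $F_{4,\boldsymbol w}F_{3,\boldsymbol w}=F_{3,\boldsymbol w}'F_{4,\boldsymbol w}$ while all other pairs among $F_{1,\boldsymbol w},\dotsc,F_{4,\boldsymbol w}$ commute; in particular $F_{1,\boldsymbol w}+F_{2,\boldsymbol w}$ commutes with $F_{1,\boldsymbol w}+F_{3,\boldsymbol w}+F_{4,\boldsymbol w}$, so
\begin{equation*}
    0\longrightarrow\sF_m^4\overset{d_{1,\boldsymbol w}}{\longrightarrow}\bigl(\sF_{m-1}^4\bigr)^2\overset{d_{2,\boldsymbol w}}{\longrightarrow}\sF_{m-2}^4\longrightarrow0
\end{equation*}
is a complex, where $d_{1,\boldsymbol w}:=\bigl(F_{1,\boldsymbol w}+F_{2,\boldsymbol w},\,F_{1,\boldsymbol w}+F_{3,\boldsymbol w}+F_{4,\boldsymbol w}\bigr)^T$ and $d_{2,\boldsymbol w}:=\bigl(F_{1,\boldsymbol w}+F_{3,\boldsymbol w}+F_{4,\boldsymbol w},\,-(F_{1,\boldsymbol w}+F_{2,\boldsymbol w})\bigr)$, and $\ker d_{1,\boldsymbol w}$ is exactly the set of solutions of~\eqref{eqn:inner-ovsienko-redou-recurrence}. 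The count $\lv\sI_m^4\rv=\binom{m+3}{3}$ gives Euler characteristic $\lv\sF_m^4\rv-2\lv\sF_{m-1}^4\rv+\lv\sF_{m-2}^4\rv=m+1=k-\ell+1$. Then, exactly as in the proof of \cref{tridifferential}: on the dense set $W\subset\bR^2$ of weights for which $F_{1,\boldsymbol w},F_{2,\boldsymbol w},F_{3,\boldsymbol w},F_{3,\boldsymbol w}',F_{4,\boldsymbol w}$ are all surjective (the complement of $W$ being contained in a countable union of affine lines), each of these operators has a canonical right inverse, the operator entries of $d_{1,\boldsymbol w},d_{2,\boldsymbol w}$ have right inverses given by terminating Neumann series, and explicit splitting maps built as in \cref{tridifferential} show the displayed sequence is exact at $\bigl(\sF_{m-1}^4\bigr)^2$ and at $\sF_{m-2}^4$, whence $\dim\ker d_{1,\boldsymbol w}=k-\ell+1$ for $\boldsymbol w\in W$. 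Upper semicontinuity of $\boldsymbol w\mapsto\dim\ker d_{1,\boldsymbol w}$ then gives $\dim\ker d_{1,\boldsymbol w}\geq k-\ell+1$ for all $\boldsymbol w$, the asserted bound.

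The step I expect to be the main obstacle is the commutator computation of the first paragraph. Because of the nesting $\cDelta^{\alpha_1}\bigl((\cDelta^{\alpha_2}\cu)\cdot\cDelta^{\alpha_3}(\cI\cDelta^{\alpha_4}\cv)\bigr)$ and the insertion of $\cI$, one must check carefully that the correction produced when $\cQ$ crosses $\cDelta^{\alpha_3}$ --- which sees both the $-2\ell$ weight shift from $\cI$ and the weight $w_2-2\alpha_4$ carried by $\cDelta^{\alpha_4}\cv$ --- collapses to exactly the coefficient $\tfrac n2+w_2-2\ell-\alpha_3-2\alpha_4-1$ of $A_{\alpha+\vec e_3}$, and that the outermost commutator with $\cDelta^{\alpha_1}$ simplifies, using $\alpha_1+\dotsb+\alpha_4=k-\ell$, to the coefficient $\tfrac n2+w-2k+\alpha_1+1$ of $A_{\alpha+\vec e_1}$. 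Once the commutation relations among the $F_{j,\boldsymbol w}$ are in hand, verifying that the displayed sequence is a complex and constructing the splitting maps witnessing its exactness for generic weights are routine, exactly as in the proof of \cref{tridifferential}.
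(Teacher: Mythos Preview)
Your proposal is correct and follows essentially the same approach as the paper's proof: compute the commutators $[\cD,\cQ]_j$ via Equation~\eqref{eqn:fundamental-commutator}, identify the resulting $B^{(j)}$ with the recurrence relations, encode the recurrences as $\ker d_{1,\boldsymbol w}$ in a short chain complex built from the same operators $F_{j,\boldsymbol w}$, and obtain the dimension bound from the Euler characteristic together with generic exactness and upper semicontinuity. The one cosmetic difference is that you introduce the shifted variant $F_{3,\boldsymbol w}'$ to record $F_{4,\boldsymbol w}F_{3,\boldsymbol w}=F_{3,\boldsymbol w}'F_{4,\boldsymbol w}$, whereas the paper does not; this is harmless but unnecessary here, since the only commutation needed for both the complex property and the splitting is that of $F_{1,\boldsymbol w}+F_{2,\boldsymbol w}$ with $F_{1,\boldsymbol w}+F_{3,\boldsymbol w}+F_{4,\boldsymbol w}$, and the coefficients of $F_{1,\boldsymbol w},F_{2,\boldsymbol w}$ depend only on $\alpha_1,\alpha_2$.
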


\begin{proof}
    Define $\cD$ by Equation~\eqref{eqn:inner-ovsienko-redou}.
    Direct computation yields
    \begin{equation*}
        \bigl[ \cD , \cQ \bigr]_j = -4(k-\ell)\sum_{\alpha \in \sI_{k-\ell-1}^4} \binom{k-\ell-1}{\alpha} B_\alpha^{(j)} \cDelta^{\alpha_1} \left( \bigl( \cDelta^{\alpha_2}\cu \bigr) \bigl( \cDelta^{\alpha_3}( \cI \cDelta^{\alpha_4}\cv ) \bigr) \right)
    \end{equation*}
    on $\cmE[w_1-2] \otimes \cmE[w_2]$ when $j=1$, and on $\cmE[w_1] \otimes \cmE[w_2-2]$ when $j=2$, where
    \begin{align*}
        B_\alpha^{(1)} & := \left( \frac{n}{2} + w_1 - \alpha_2 - 1 \right)A_{\alpha+\vec{e}_2} + \left( \frac{n}{2} + w - 2k + \alpha_1 + 1 \right)A_{\alpha+\vec{e}_1} , \\
        B_\alpha^{(2)} & := \left( \frac{n}{2} + w_2 - \alpha_4 - 1 \right)A_{\alpha+\vec{e}_4} + \left( \frac{n}{2} + w - 2k + \alpha_1 + 1 \right)A_{\alpha+\vec{e}_1} \\
         & \quad + \left( \frac{n}{2} + w_2 - 2\ell - \alpha_3 - 2\alpha_4 - 1 \right)A_{\alpha+\vec{e}_3} .
    \end{align*}
    It follows immediately from \cref{what-to-check-for-tangential} and our assumptions on $A$ that $\cD$ is tangential on $\cmE[w_1] \otimes \cmE[w_2]$.

    Given $\boldsymbol{w} \in \bR^2$, define $F_{j,\boldsymbol{w}} \colon \sF_s^4 \to \sF_{s-1}^4$, $j \in \{1,2,3,4\}$ and $s \in \bN$, by
    \begin{align*}
        F_{1,\boldsymbol{w}}(A)_\alpha & := \left( \frac{n}{2} + w - 2k + \alpha_1 + 1 \right)A_{\alpha + \vec{e}_1} , \\
        F_{2,\boldsymbol{w}}(A)_\alpha & := \left( \frac{n}{2} + w_1 - \alpha_2 -1 \right)A_{\alpha + \vec{e}_2} , \\
        F_{3,\boldsymbol{w}}(A)_\alpha & := \left( \frac{n}{2} + w_2 - 2\ell - \alpha_3 - 2\alpha_4 - 1 \right) , \\
        F_{4,\boldsymbol{w}}(A)_\alpha & := \left( \frac{n}{2} + w_2 - \alpha_4 - 1 \right)A_{\alpha + \vec{e}_4} .
    \end{align*}
    Direct computation implies that
    \begin{equation*}
        0 \longrightarrow \sF_{k-\ell}^4 \overset{d_{1,\boldsymbol{w}}}{\longrightarrow} \bigl(\sF_{k-\ell-1}^4\bigr)^2 \overset{d_{2,\boldsymbol{w}}}{\longrightarrow} \sF_{k-\ell-2}^4 \longrightarrow 0
    \end{equation*}
    is a chain complex, where
    \begin{align*}
        d_{1,\boldsymbol{w}} & := \begin{pmatrix} F_{1,\boldsymbol{w}} + F_{2,\boldsymbol{w}} & F_{1,\boldsymbol{w}} + F_{3,\boldsymbol{w}} + F_{4,\boldsymbol{w}} \end{pmatrix}^T , \\
        d_{2,\boldsymbol{w}} & := \begin{pmatrix} F_{1,\boldsymbol{w}} + F_{3,\boldsymbol{w}} + F_{4,\boldsymbol{w}} & -( F_{1,\boldsymbol{w}} + F_{2,\boldsymbol{w}}) \end{pmatrix} .
    \end{align*}
    Note that each element of $\ker d_{1,\boldsymbol{w}}$ gives a solution of the recurrence relation~\eqref{eqn:inner-ovsienko-redou-recurrence}.
    An argument as in the proof of \cref{tridifferential} implies that
    \begin{equation*}
        \dim \ker d_{1,\boldsymbol{w}} \geq \lv \sF_{k-\ell}^4 \rv - 2\lv\sF_{k-\ell-1}^4\rv + \lv \sF_{k-\ell-2}^4\rv = k-\ell+1 . \qedhere
    \end{equation*}
\end{proof}

Our third bidifferential operator modifies a curved Ovsienko--Redou operator by replacing the outer Laplacian by a sum $\sum \cDelta^{\alpha_1} \circ I \circ \cDelta^{\alpha_2}$:

\begin{proposition}
    \label{inner-ovsienko-redou2}
    Fix integers $k \geq \ell \geq 0$ and $n \geq 3$;
    if $n$ is even, then assume additionally that $n \geq 2k$.
    Fix $w_1,w_2 \in \bR$ and let $A \in \sF_{k-\ell}^4$ be a solution of the recurrence relations
    \begin{equation}
        \label{eqn:inner-ovsienko-redou-recurrence2}
        \begin{split}
            \MoveEqLeft[10] \left( \frac{n}{2}+w_1-\alpha_3-1 \right)A_{\alpha+\vec{e}_3} = -\left( \frac{n}{2} + w - 2k + \alpha_1 + 1 \right)A_{\alpha+\vec{e}_1} \\
             & - \left(\frac{n}{2} + w - \alpha_2 - 2\alpha_3 - 2\alpha_4 - 1 \right)A_{\alpha+\vec{e}_2} , \\
            \MoveEqLeft[10] \left( \frac{n}{2}+w_2-\alpha_4-1 \right)A_{\alpha+\vec{e}_4} = -\left( \frac{n}{2} + w - 2k + \alpha_1 + 1 \right)A_{\alpha+\vec{e}_1} \\
             & - \left(\frac{n}{2} + w - \alpha_2 - 2\alpha_3 - 2\alpha_4 - 1 \right)A_{\alpha+\vec{e}_2} ,
        \end{split}
    \end{equation}
    for $w := w_1 + w_2$.
    If $\cI$ is a scalar Riemannian invariant of weight $-2\ell$ on $(n+2)$-manifolds, then
    \begin{equation*}
        \label{eqn:inner-ovsienko-redou2}
        \cD(\cu \otimes \cv) := \sum_{\alpha\in\sI_{k-\ell}^4} \binom{k-\ell}{\alpha} A_\alpha \cDelta^{\alpha_1} \left( \cI \cDelta^{\alpha_2} \bigl( ( \cDelta^{\alpha_3}\cu ) ( \cDelta^{\alpha_4}\cv ) \bigr) \right)
    \end{equation*}
    is a tangential ambient bidifferential operator on $\cmE[w_1] \otimes \cmE[w_2]$.
    Moreover, the space of solutions of the recurrence relations~\eqref{eqn:inner-ovsienko-redou-recurrence2} is at least $(k-\ell+1)$-dimensional.
\end{proposition}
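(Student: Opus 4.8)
The plan is to follow the template established by \cref{tridifferential} and its bidifferential corollaries. First define $\cD$ by the stated formula. Since the hypotheses on $n$ and $k$ are those of \cref{what-to-check-for-tangential}, it suffices to show that the two commutators $[\cD,\cQ]_j$, $j\in\{1,2\}$, vanish. Using the product rule for commutators together with Equation~\eqref{eqn:fundamental-commutator}, one pushes $\cQ$ outward through the layers $\cDelta^{\alpha_3}$ (resp.\ $\cDelta^{\alpha_4}$), then $\cDelta^{\alpha_2}$, then $\cI\cDelta^{\alpha_1}$: commuting $\cQ$ past the scalar factor $\cI$ is free, while commuting past each $\cDelta^{\alpha_i}$ contributes a term whose coefficient is read off from Equation~\eqref{eqn:fundamental-commutator} at the homogeneity of the intermediate expression. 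Exactly as in the proof of \cref{tridifferential}, for $j=1$ (the case $j=2$ being symmetric) these homogeneities are $\cDelta^{\alpha_3}\cu\in\cmE[w_1-2-2\alpha_3]$ when $\cu\in\cmE[w_1-2]$, hence $(\cDelta^{\alpha_3}\cu)(\cDelta^{\alpha_4}\cv)\in\cmE[w-2-2\alpha_3-2\alpha_4]$ and $\cI\cDelta^{\alpha_2}\bigl((\cDelta^{\alpha_3}\cu)(\cDelta^{\alpha_4}\cv)\bigr)\in\cmE[w-2-2k+2\alpha_1]$, where the last equality uses $\cI\in\cmE[-2\ell]$ and $\alpha_1+\alpha_2+\alpha_3+\alpha_4=k-\ell$. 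Re-indexing each of the three resulting sums from $\sI_{k-\ell}^4$ to $\sI_{k-\ell-1}^4$ via $\binom{k-\ell}{\alpha+\vec{e}_i}(\alpha_i+1)=(k-\ell)\binom{k-\ell-1}{\alpha}$ produces
\[
    [\cD,\cQ]_j = -4(k-\ell)\sum_{\alpha\in\sI_{k-\ell-1}^4}\binom{k-\ell-1}{\alpha}B_\alpha^{(j)}\,\cDelta^{\alpha_1}\bigl(\cI\cDelta^{\alpha_2}((\cDelta^{\alpha_3}\cu)(\cDelta^{\alpha_4}\cv))\bigr),
\]
where $B_\alpha^{(1)}$ is a linear combination of $A_{\alpha+\vec{e}_1},A_{\alpha+\vec{e}_2},A_{\alpha+\vec{e}_3}$ and $B_\alpha^{(2)}$ one of $A_{\alpha+\vec{e}_1},A_{\alpha+\vec{e}_2},A_{\alpha+\vec{e}_4}$, chosen so that $B_\alpha^{(j)}=0$ is precisely the $j$-th relation in~\eqref{eqn:inner-ovsienko-redou-recurrence2}. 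The hypothesis on $A$ thus gives $B^{(1)}=B^{(2)}=0$, and \cref{what-to-check-for-tangential} yields tangentiality.

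For the dimension estimate, define $F_{j,\boldsymbol{w}},F_{2,\boldsymbol{w}}'\colon\sF_s^4\to\sF_{s-1}^4$ by
\[
\begin{aligned}
    F_{1,\boldsymbol{w}}(A)_\alpha &:= \Bigl(\frac{n}{2}+w-2k+\alpha_1+1\Bigr)A_{\alpha+\vec{e}_1}, & F_{3,\boldsymbol{w}}(A)_\alpha &:= \Bigl(\frac{n}{2}+w_1-\alpha_3-1\Bigr)A_{\alpha+\vec{e}_3}, \\
    F_{2,\boldsymbol{w}}(A)_\alpha &:= \Bigl(\frac{n}{2}+w-\alpha_2-2\alpha_3-2\alpha_4-1\Bigr)A_{\alpha+\vec{e}_2}, & F_{4,\boldsymbol{w}}(A)_\alpha &:= \Bigl(\frac{n}{2}+w_2-\alpha_4-1\Bigr)A_{\alpha+\vec{e}_4},
\end{aligned}
\]
with $F_{2,\boldsymbol{w}}'$ obtained from $F_{2,\boldsymbol{w}}$ by decreasing its coefficient by $2$. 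Because $F_{2,\boldsymbol{w}}$'s coefficient is affine in $\alpha_3,\alpha_4$ with slope $-2$ in each, the same elementary checks as in \cref{tridifferential} give $[F_{i,\boldsymbol{w}},F_{j,\boldsymbol{w}}]=0$ for $i,j\in\{1,3,4\}$, $[F_{1,\boldsymbol{w}},F_{2,\boldsymbol{w}}]=0$, and $F_{i,\boldsymbol{w}}F_{2,\boldsymbol{w}}=F_{2,\boldsymbol{w}}'F_{i,\boldsymbol{w}}$ for $i\in\{3,4\}$. A solution of~\eqref{eqn:inner-ovsienko-redou-recurrence2} is exactly an element of $\ker d_{1,\boldsymbol{w}}$, where $d_{1,\boldsymbol{w}}:=\bigl(F_{1,\boldsymbol{w}}+F_{2,\boldsymbol{w}}+F_{3,\boldsymbol{w}},\ F_{1,\boldsymbol{w}}+F_{2,\boldsymbol{w}}+F_{4,\boldsymbol{w}}\bigr)^T$, and these relations show that
\[
    0\longrightarrow\sF_{k-\ell}^4\overset{d_{1,\boldsymbol{w}}}{\longrightarrow}\bigl(\sF_{k-\ell-1}^4\bigr)^2\overset{d_{2,\boldsymbol{w}}}{\longrightarrow}\sF_{k-\ell-2}^4\longrightarrow0,\qquad d_{2,\boldsymbol{w}}:=\bigl(-(F_{1,\boldsymbol{w}}+F_{2,\boldsymbol{w}}'+F_{4,\boldsymbol{w}}),\ F_{1,\boldsymbol{w}}+F_{2,\boldsymbol{w}}'+F_{3,\boldsymbol{w}}\bigr),
\]
is a complex (the identity $d_{2,\boldsymbol{w}}d_{1,\boldsymbol{w}}=0$ reduces to the commutation relations just listed, exactly as for \cref{tridifferential}). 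Note the appearance of $F_{2,\boldsymbol{w}}'$, rather than $F_{2,\boldsymbol{w}}$, in $d_{2,\boldsymbol{w}}$; this is the only genuinely new feature relative to \cref{outer-ovsienko-redou,inner-ovsienko-redou}.

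The Euler characteristic of the complex is $\lv\sF_{k-\ell}^4\rv-2\lv\sF_{k-\ell-1}^4\rv+\lv\sF_{k-\ell-2}^4\rv=\binom{k-\ell+3}{3}-2\binom{k-\ell+2}{3}+\binom{k-\ell+1}{3}=k-\ell+1$. Arguing as in \cref{tridifferential}: for all $\boldsymbol{w}$ outside the countable union of affine lines in $\bR^2$ on which some $F_{j,\boldsymbol{w}}$ or $F_{2,\boldsymbol{w}}'$ fails to be invertible on some $\sF_s^4$, one inverts every operator occurring in $d_{1,\boldsymbol{w}}$ and $d_{2,\boldsymbol{w}}$ by a terminating Neumann series (each $F$ lowers degree), from which direct computation shows $d_{2,\boldsymbol{w}}$ is surjective and $\ker d_{2,\boldsymbol{w}}=\im d_{1,\boldsymbol{w}}$; hence $\dim\ker d_{1,\boldsymbol{w}}=k-\ell+1$ on this dense set, and upper semicontinuity of $\boldsymbol{w}\mapsto\dim\ker d_{1,\boldsymbol{w}}$ forces $\dim\ker d_{1,\boldsymbol{w}}\ge k-\ell+1$ for all $\boldsymbol{w}$. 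The main obstacle is just the bookkeeping in the commutator computation --- correctly tracking the homogeneity each inner Laplacian acts on, and checking that the $-1$ shifts created by re-indexing match the recurrence relations~\eqref{eqn:inner-ovsienko-redou-recurrence2} --- but this is entirely parallel to the corresponding step in the proof of \cref{tridifferential}.
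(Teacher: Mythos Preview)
Your proposal is correct and follows essentially the same approach as the paper's proof: compute the commutators $[\cD,\cQ]_j$ via Equation~\eqref{eqn:fundamental-commutator} to obtain $B^{(j)}$, identify $B^{(j)}=0$ with the recurrence relations, set up the chain complex $0\to\sF_{k-\ell}^4\to(\sF_{k-\ell-1}^4)^2\to\sF_{k-\ell-2}^4\to0$ using the operators $F_{j,\boldsymbol{w}},F_{2,\boldsymbol{w}}'$, and conclude via the Euler characteristic and upper semicontinuity. Your $d_{2,\boldsymbol{w}}$ differs from the paper's only by an overall sign, which is immaterial.
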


\begin{proof}
    Define $\cD$ by Equation~\eqref{eqn:inner-ovsienko-redou2}.
    Direct computation yields
    \begin{equation*}
        \bigl[ \cD , \cQ \bigr]_j = -4(k-\ell)\sum_{\alpha \in \sI_{k-\ell-1}^4} \binom{k-\ell-1}{\alpha} B_\alpha^{(j)} \cDelta^{\alpha_1} \left( \cI \cDelta^{\alpha_2} \bigl( ( \cDelta^{\alpha_3}\cu ) ( \cDelta^{\alpha_4}\cv ) \bigr) \right)
    \end{equation*}
    on $\cmE[w_1-2] \otimes \cmE[w_2]$ when $j=1$, and on $\cmE[w_1] \otimes \cmE[w_2-2]$ when $j=2$, where
    \begin{align*}
        B_\alpha^{(1)} & := \left( \frac{n}{2} + w_1 - \alpha_3 - 1 \right)A_{\alpha+\vec{e}_3} + \left( \frac{n}{2} + w - 2k + \alpha_1 + 1 \right)A_{\alpha+\vec{e}_1} \\
         & \quad + \left( \frac{n}{2} + w - \alpha_2 - 2\alpha_3 - 2\alpha_4 - 1 \right)A_{\alpha+\vec{e}_2} , \\
        B_\alpha^{(2)} & := \left( \frac{n}{2} + w_2 - \alpha_4 - 1 \right)A_{\alpha+\vec{e}_4} + \left( \frac{n}{2} + w - 2k + \alpha_1 + 1 \right)A_{\alpha+\vec{e}_1} \\
         & \quad + \left( \frac{n}{2} + w - \alpha_2 - 2\alpha_3 - 2\alpha_4 - 1 \right)A_{\alpha+\vec{e}_2} .
    \end{align*}
    It follows immediately from \cref{what-to-check-for-tangential} and our assumptions on $A$ that $\cD$ is tangential on $\cmE[w_1] \otimes \cmE[w_2]$.

    Given $\boldsymbol{w} \in \bR^2$, define $F_{j,\boldsymbol{w}},F_{2,\boldsymbol{w}}' \colon \sF_s^4 \to \sF_{s-1}^4$, $j \in \{ 1,2,3,4 \}$ and $s \in \bN$, by
    \begin{align*}
        F_{1,\boldsymbol{w}}(A)_\alpha & := \left( \frac{n}{2} + w - 2k + \alpha_1 + 1 \right)A_{\alpha + \vec{e}_1} , \\
        F_{2,\boldsymbol{w}}(A)_\alpha & := \left( \frac{n}{2} + w - \alpha_2 - 2\alpha_3 - 2\alpha_4 - 1 \right)A_{\alpha + \vec{e}_2} , \\
        F_{2,\boldsymbol{w}}'(A)_\alpha & := \left( \frac{n}{2} + w - \alpha_2 - 2\alpha_3 - 2\alpha_4 - 3 \right)A_{\alpha + \vec{e}_2} , \\
        F_{3,\boldsymbol{w}}(A)_\alpha & := \left(\frac{n}{2} + w_1 - \alpha_3 - 1 \right)A_{\alpha + \vec{e}_3} , \\
        F_{4,\boldsymbol{w}}(A)_\alpha & := \left( \frac{n}{2} + w_2 - \alpha_4 - 1 \right)A_{\alpha + \vec{e}_4} .
    \end{align*}
    Direct computation implies that
    \begin{equation*}
        0 \longrightarrow \sF_{k-\ell}^4 \overset{d_{1,\boldsymbol{w}}}{\longrightarrow} \bigl(\sF_{k-\ell-1}^4\bigr)^2 \overset{d_{2,\boldsymbol{w}}}{\longrightarrow} \sF_{k-\ell-2}^4 \longrightarrow 0
    \end{equation*} is a chain complex, where
    \begin{align*}
        d_{1,\boldsymbol{w}} & := \begin{pmatrix} F_{1,\boldsymbol{w}} + F_{2,\boldsymbol{w}} + F_{3,\boldsymbol{w}} & F_{1,\boldsymbol{w}} + F_{2,\boldsymbol{w}} + F_{4,\boldsymbol{w}} \end{pmatrix}^T , \\
        d_{2,\boldsymbol{w}} & := \begin{pmatrix} F_{1,\boldsymbol{w}} + F_{2,\boldsymbol{w}}' + F_{4,\boldsymbol{w}} & -( F_{1,\boldsymbol{w}} + F_{2,\boldsymbol{w}}' + F_{3,\boldsymbol{w}} ) \end{pmatrix} .
    \end{align*}
    Note that each element of $\ker d_{1,\boldsymbol{w}}$ gives a solution of recurrence relation~\eqref{eqn:inner-ovsienko-redou-recurrence2}.
    An argument as in the proof of \cref{tridifferential} implies that
    \begin{equation*}
        \dim \ker d_{1,\boldsymbol{w}} \geq \lv \sF_{k-\ell}^4 \rv - 2\lv\sF_{k-\ell-1}^4\rv + \lv\sF_{k-\ell-2}^4\rv = k-\ell+1 . \qedhere
    \end{equation*}
\end{proof}
\section{Formal self-adjointness in the ambient space}
\label{sec:fsa}

In this section we prove \cref{fsa}. First, we show that the recurrence relations in the proof of \cref{tridifferential} force certain identities in the coefficients.
Then we use those identities to show that a symmetric tangential operator must be formally self-adjoint.
The identities are as follows:

\begin{lemma}
    \label{index-permutation}
    Let $n > 2k$ be integers and let $A \in \sF^k_5$ be a solution to the recurrence relations
    \begin{subequations}
        \label{eqn:recurrence}
        \begin{align}
            \label{eqn:recurrence-1} (n+2k-4\alpha_3-4)A_{\alpha+\vec{e}_3} & = (n+2k-4\alpha_4-4)A_{\alpha+\vec{e}_4} , \\
            \label{eqn:recurrence-2} (n+2k-4\alpha_5-4)A_{\alpha+\vec{e}_5} & = (n+2k-4\alpha_1-4)A_{\alpha+\vec{e}_1} , \\
            \label{eqn:recurrence-3} (n+2k-4\alpha_3-4)A_{\alpha+\vec{e}_3} & = (n+2k-4\alpha_1-4)A_{\alpha + \vec{e}_1} , \\
               \notag & \quad - 4(\alpha_1 - \alpha_3 - \alpha_4 + \alpha_5)A_{\alpha + \vec{e_2}} ,
        \end{align}
    \end{subequations}
    for all $\alpha \in \sI_{k-1}^5$. 
    Then
    \begin{align}
        \label{eqn:permutation1} A_{\alpha_1, \alpha_2, \alpha_3, \alpha_4, \alpha_5} & = A_{\alpha_1, \alpha_2, \alpha_4, \alpha_3, \alpha_5} , \\
        \label{eqn:permutation2} A_{\alpha_1, \alpha_2, \alpha_3, \alpha_4, \alpha_5} & = A_{\alpha_5, \alpha_2, \alpha_3, \alpha_4, \alpha_1} , \\
        \label{eqn:permutation3} A_{\alpha_1, \alpha_2, \alpha_3, \alpha_4, \alpha_5} & = A_{\alpha_3, \alpha_2, \alpha_1, \alpha_5, \alpha_4} ,
    \end{align}
    for all $\alpha \in \sI_k^5$.
\end{lemma}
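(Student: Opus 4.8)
The three displayed identities say that $A$ is invariant under the permutations $(3\,4)$, $(1\,5)$, and $(1\,3)(4\,5)$ of the five index slots. It is convenient to note at the outset that, since $n>2k$ and each entry of a tuple in $\sI_{k-1}^5$ is at most $k-1$, every coefficient $n+2k-4j-4$ with $0\le j\le k-1$ that appears below satisfies $n+2k-4j-4\ge n-2k>0$; this positivity is used throughout.

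I would prove \eqref{eqn:permutation1} and \eqref{eqn:permutation2} by iterating the ``balanced'' recurrences \eqref{eqn:recurrence-1} and \eqref{eqn:recurrence-2}, which contain no $A_{\alpha+\vec{e}_2}$ term. Holding every entry but $\alpha_3,\alpha_4$ fixed and transporting units one at a time from slot $3$ to slot $4$ via \eqref{eqn:recurrence-1} expresses $A_{\alpha_1,\alpha_2,\alpha_3,\alpha_4,\alpha_5}$ as a product of ratios of coefficients times $A_{\alpha_1,\alpha_2,\alpha_4,\alpha_3,\alpha_5}$; one then checks that the numerators and the denominators of these ratios form the same multiset $\{\,n+2k-4j\suchthat \alpha_4<j\le\alpha_3\,\}$, so the product equals $1$ and \eqref{eqn:permutation1} follows. (Equivalently, $\Gamma\bigl(\tfrac{n+2k}{4}-\alpha_3\bigr)^{-1}\Gamma\bigl(\tfrac{n+2k}{4}-\alpha_4\bigr)^{-1}A_\alpha$ is symmetric in $\alpha_3$ and $\alpha_4$.) The identity \eqref{eqn:permutation2} is proved identically, with slots $1,5$ and \eqref{eqn:recurrence-2} in place of slots $3,4$ and \eqref{eqn:recurrence-1}.

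For \eqref{eqn:permutation3}, write $\pi$ for the permutation of index tuples interchanging slots $1\leftrightarrow 3$ and $4\leftrightarrow 5$, and set $D(\delta):=A_\delta-A_{\pi\delta}$, so that $D\circ\pi=-D$. Since $\pi$ carries $\vec{e}_3$ to $\vec{e}_1$ and $\vec{e}_4$ to $\vec{e}_5$ and negates the quantity $\alpha_1-\alpha_3-\alpha_4+\alpha_5$, evaluating \eqref{eqn:recurrence-1} at $\alpha$ and \eqref{eqn:recurrence-2} at $\pi\alpha$ and subtracting, and likewise evaluating \eqref{eqn:recurrence-2} at $\alpha$ and \eqref{eqn:recurrence-1} at $\pi\alpha$, yields the \emph{unconditional} identities
\begin{align*}
    (n+2k-4\alpha_3-4)\,D(\alpha+\vec{e}_3) &= (n+2k-4\alpha_4-4)\,D(\alpha+\vec{e}_4), \\
    (n+2k-4\alpha_5-4)\,D(\alpha+\vec{e}_5) &= (n+2k-4\alpha_1-4)\,D(\alpha+\vec{e}_1),
\end{align*}
valid for all $\alpha\in\sI_{k-1}^5$. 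The remaining relation requires an induction, which I would carry out downward on the middle index $\alpha_2$ (equivalently, upward on $\alpha_1+\alpha_3+\alpha_4+\alpha_5$); the base case $\alpha_2=k$ is trivial since $\pi$ fixes the unique tuple $(0,k,0,0,0)$. In the inductive step at level $\alpha_2=m<k$, evaluate \eqref{eqn:recurrence-3} at $\alpha$ and at $\pi\alpha$: the inductive hypothesis gives $A_{\alpha+\vec{e}_2}=A_{\pi(\alpha+\vec{e}_2)}$, because $\alpha+\vec{e}_2$ has middle index $m+1$, so the two inhomogeneous terms cancel upon subtraction and one is left with
\begin{equation*}
    (n+2k-4\alpha_3-4)\,D(\alpha+\vec{e}_3) = (n+2k-4\alpha_1-4)\,D(\alpha+\vec{e}_1)
\end{equation*}
for every $\alpha\in\sI_{k-1}^5$ with $\alpha_2=m$.

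Together these relations show that on each level (tuples with a prescribed middle index) $D$ is determined up to a single scalar by its value at any one tuple, that passing between adjacent tuples by a unit shift among slots $\{1,3,4,5\}$ multiplies $D$ by a ratio of two positive coefficients and hence by a positive number, and that any two tuples on the same level are joined by such shifts. Since $\pi$ preserves levels and $D\circ\pi=-D$, every tuple $\delta$ satisfies $D(\delta)=c\,D(\pi\delta)=-c\,D(\delta)$ for some $c>0$, forcing $D(\delta)=0$; this closes the induction and proves \eqref{eqn:permutation3}. The step I expect to be the main obstacle is handling the inhomogeneous term $-4(\alpha_1-\alpha_3-\alpha_4+\alpha_5)A_{\alpha+\vec{e}_2}$ in \eqref{eqn:recurrence-3}: it is exactly what prevents a direct telescoping argument for \eqref{eqn:permutation3} and makes the induction necessary, and the crucial feature is that this term is $\pi$-antisymmetric, so it cancels against its $\pi$-image precisely once the inductive hypothesis identifies $A_{\alpha+\vec{e}_2}$ with $A_{\pi(\alpha+\vec{e}_2)}$. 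A secondary point requiring care is tracking the signs and the positivity of the coefficients $n+2k-4j-4$, which is where the strict inequality $n>2k$ is used and what makes the final cancellation $1+c\neq 0$ valid; when $n=2k$ one must instead treat separately those tuples at which such a coefficient vanishes, as the remark following the statement indicates.
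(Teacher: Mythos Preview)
Your argument is correct and shares the paper's core ideas: the telescoping/ratio argument for \eqref{eqn:permutation1} and \eqref{eqn:permutation2} is exactly the content of the paper's Lemma~\ref{induction-lemma}, and for \eqref{eqn:permutation3} both you and the paper run a downward induction on $\alpha_2$, using the inductive hypothesis to kill the inhomogeneous $A_{\alpha+\vec{e}_2}$ term in \eqref{eqn:recurrence-3} and then exploiting the antisymmetry $D\circ\pi=-D$ together with positivity of the coefficients.

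The organizational difference is this: the paper first isolates the diagonal slice $\alpha_4=\alpha_5$ (Lemma~\ref{d-e=0 lemma}), proving $D=0$ there by an $\alpha_2$-induction that reduces to a two-variable symmetric/antisymmetric clash via Lemma~\ref{induction-lemma}, and only afterwards propagates outward by a second induction on $\alpha_5-\alpha_4$ using a single $D$-relation. You instead treat every $\alpha_2$-level at once, deriving all three $D$-relations (the first two unconditionally, the third under the inductive hypothesis) and then arguing that $D$ has constant sign on each connected level, which the antisymmetry forces to zero. Your route is a bit more unified and avoids the auxiliary Lemma~\ref{d-e=0 lemma}; the paper's route keeps each step closer to the two-variable Lemma~\ref{induction-lemma}. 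One small wording issue: when you combine \eqref{eqn:recurrence-3} at $\alpha$ and at $\pi\alpha$, the correct linear combination to produce the $D$-relation is an \emph{addition} after rearranging (equivalently, subtracting after rewriting the $\pi\alpha$-equation with the roles of $\vec{e}_1,\vec{e}_3$ swapped), not a literal subtraction; your stated conclusion is nonetheless correct.
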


The proof of Lemma \ref{index-permutation} is by a long induction argument. Its key ingredient is the following elementary result:
\begin{lemma}
    \label{induction-lemma}
    Let $A \in \sF^2_{k+1}$ and $f \colon \bN_0 \to \bR$ be such that $f$ is nowhere zero and
    \begin{equation*}
        f(\alpha_1+1)A_{\alpha_1+1,\alpha_2} = f(\alpha_2+1)A_{\alpha_1,\alpha_2+1}
    \end{equation*}
    for all $\alpha \in \sI_k^2$.
    Then $A_{\alpha_1,\alpha_2} = A_{\alpha_2,\alpha_1}$ for all $\alpha\in\sI^2_{k+1}$.
\end{lemma}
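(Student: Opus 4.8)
The plan is to use the fact that, since $f$ is nowhere zero, the stated recurrence propagates along the antidiagonal $\sI_{k+1}^2$ and hence determines every value $A_{\alpha_1,\alpha_2}$, $\alpha\in\sI_{k+1}^2$, from the single value $A_{0,k+1}$; I would then write the resulting closed form and observe that it is manifestly invariant under swapping the two indices. Concretely, abbreviate $A_j := A_{j,\,k+1-j}$ for $0 \le j \le k+1$, so that the hypothesis becomes $f(j+1)A_{j+1} = f(k+1-j)A_j$ for $0 \le j \le k$. Introducing $\psi_j := \bigl(\prod_{m=1}^{j} f(m)\bigr)A_j$ (empty product $=1$) turns this into $\psi_{j+1} = f(k+1-j)\psi_j$, and a one-line induction yields
\begin{equation*}
    \Bigl(\prod_{m=1}^{j} f(m)\Bigr)A_j \;=\; A_{0,k+1}\prod_{m=k+2-j}^{k+1} f(m), \qquad 0 \le j \le k+1 .
\end{equation*}

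To extract the symmetry, I would multiply this identity for index $j$ by $\prod_{m=j+1}^{k+1} f(m)$ and the corresponding identity for index $k+1-j$ by $\prod_{m=k+2-j}^{k+1} f(m)$. Since $[1,j]\cup[j+1,k+1]$ and $[1,k+1-j]\cup[k+2-j,k+1]$ are both partitions of $[1,k+1]$, the two left-hand sides become $\bigl(\prod_{m=1}^{k+1} f(m)\bigr)A_j$ and $\bigl(\prod_{m=1}^{k+1} f(m)\bigr)A_{k+1-j}$ respectively, while the two right-hand sides are visibly the same expression in $A_{0,k+1}$. Cancelling the nonzero factor $\prod_{m=1}^{k+1} f(m)$ gives $A_j = A_{k+1-j}$, which is the claim.

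Alternatively — and closer in spirit to the long induction in \cref{index-permutation} where this lemma is applied — one can argue by descending induction on $P(j)\colon A_j = A_{k+1-j}$. The base case is the recurrence at the middle index: when $k=2\ell$ it reads $f(\ell+1)A_{\ell+1} = f(\ell+1)A_\ell$, whence $P(\ell)$, and when $k+1=2\ell$ the statement $P(\ell)$ is a tautology. For the inductive step, the recurrences at indices $j$ and $k-j$ express $A_j$ and $A_{k+1-j}$ as the same nonzero multiple $f(j+1)/f(k+1-j)$ of $A_{j+1}$ and $A_{k-j}$ respectively, and $P(j+1)$ is precisely the equality $A_{j+1}=A_{k-j}$; running the induction down to $j=0$ and invoking $P(j)\Leftrightarrow P(k+1-j)$ completes the argument. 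In either route there is no substantial obstacle; the only point demanding care is the bookkeeping of the product index ranges (equivalently, the parity split into the two base cases), in particular verifying that the degenerate endpoints $j\in\{0,k+1\}$ behave correctly.
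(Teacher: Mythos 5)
Your proposal is correct and is essentially the paper's argument: the paper simply iterates the recurrence directly from $A_{\alpha_1,\alpha_2}$ to $A_{\alpha_2,\alpha_1}$ along the antidiagonal and notes that the telescoped quotient $\prod_{i=1}^{\alpha_1-\alpha_2} f(\alpha_2+i)\big/\prod_{i=1}^{\alpha_1-\alpha_2} f(\alpha_1+1-i)$ equals $1$ after reindexing, which is the same cancellation your first route obtains by anchoring at $A_{0,k+1}$ (and your middle-outward induction is just another packaging of it). No gaps; both of your routes and the paper's use only that $f$ is nowhere zero to divide.
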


\begin{proof}
    Without loss of generality, assume $\alpha_1 > \alpha_2$.
    We observe that
    \begin{equation*}
        A_{\alpha_1, \alpha_2}=\frac{f(\alpha_2+1)}{f(\alpha_1)}A_{\alpha_1-1, \alpha_2+1} .
    \end{equation*}
    Iterating this yields
    \begin{equation*}
        A_{\alpha_1, \alpha_2}=\frac{\prod_{i=1}^{\alpha_1-\alpha_2} f(\alpha_2+i)}{\prod_{i=1}^{\alpha_1-\alpha_2} f(\alpha_1+1-i)} A_{\alpha_2, \alpha_1}
    \end{equation*}
    The conclusion follows by reindexing one of the products.
\end{proof}

As we will see below, Lemma \ref{induction-lemma} immediately implies the first two conclusions of Lemma \ref{index-permutation}.
The last conclusion follows from a double induction argument, the first step of which handles the case $\alpha_4=\alpha_5$.
To that end, given $\alpha \in \sI_5^k$, set
\begin{equation*}
    \alpha' := (\alpha_3, \alpha_2, \alpha_1, \alpha_5, \alpha_4) \in \sI_k^5 .
\end{equation*}

\begin{lemma}
    \label{d-e=0 lemma}
    Let $n > 2k$ be integers and let $A \in \sF^5_k$ satisfy recurrence relations \eqref{eqn:recurrence}.
    Let $\alpha \in \sI_k^5$ be such that $\alpha_4=\alpha_5$.
    Then
    \begin{equation*}
        A_{\alpha}=A_{\alpha'} .
    \end{equation*}
\end{lemma}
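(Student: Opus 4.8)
The plan is to restrict everything to a one‑parameter slice of the multi‑index set and then run a double induction governed solely by the recurrence \eqref{eqn:recurrence-3}. First I would fix $m := \alpha_4 = \alpha_5$, set $K := k-2m$, and abbreviate $B_{p,q,r} := A_{p,q,r,m,m}$ for nonnegative integers $p,q,r$. Writing $\lambda_j := n+2k-4j-4$, the hypothesis $n>2k$ guarantees $\lambda_j \ge n-2k > 0$ whenever $0 \le j \le k-1$; this is the only place the strict inequality enters, in accordance with \cref{rk:why-strict-inequality}. Specializing \eqref{eqn:recurrence-3} to $\alpha = (p,q,r,m,m) \in \sI_{k-1}^5$ — for which the coefficient $\alpha_1-\alpha_3-\alpha_4+\alpha_5$ collapses to $p-r$ — yields a relation that stays within the slice:
\begin{equation*}
    \lambda_r B_{p,q,r+1} = \lambda_p B_{p+1,q,r} - 4(p-r)B_{p,q+1,r}, \qquad p+q+r = K-1. \tag{$\star$}
\end{equation*}
Since $\alpha' = (\alpha_3,\alpha_2,\alpha_1,m,m)$ on this slice, the lemma is equivalent to the symmetry $B_{p,q,r} = B_{r,q,p}$ for all $p+q+r=K$.

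I would establish this symmetry by downward induction on $q$, i.e.\ upward induction on $L := K-q$, the base case $q=K$ (hence $p=r=0$) being trivial. In the inductive step, fix $q$, write $L = K-q$, and set $D_j := B_{j,q,L-j} - B_{L-j,q,j}$ for $0 \le j \le L$, so that $D_{L-j} = -D_j$ and the claim at level $q$ becomes $D_j = 0$ for all $j$. Subtracting the instance of $(\star)$ at $(L-j-1,q,j)$ from the instance at $(j,q,L-j-1)$, one finds that the two error terms have coefficients $-4(2j-L+1)$ and $-4(L-2j-1)$, which are negatives of one another; hence they combine into $4(L-2j-1)\bigl(B_{j,q+1,L-j-1} - B_{L-j-1,q+1,j}\bigr)$, which vanishes by the inductive hypothesis applied at level $q+1$ (the indices $j$ and $L-j-1$ sum to $L-1 = K-(q+1)$). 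What remains is the clean ladder
\begin{equation*}
    \lambda_{L-j-1} D_j = \lambda_j D_{j+1}, \qquad 0 \le j \le L-1. \tag{$\diamond$}
\end{equation*}
Iterating $(\diamond)$ gives $D_L = \bigl(\prod_{j=0}^{L-1} \lambda_{L-j-1}/\lambda_j\bigr) D_0 = D_0$, because numerator and denominator of the product run over the same factors; combined with $D_L = -D_0$ this forces $D_0 = 0$, and then $(\diamond)$ propagates $D_j = 0$ for every $j$. Undoing the slice identification gives $A_\alpha = A_{\alpha'}$.

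The main obstacle — and the reason this lemma does not follow at once from \cref{induction-lemma}, unlike the identities driving \eqref{eqn:permutation1} and \eqref{eqn:permutation2} — is precisely the error term $-4(p-r)B_{p,q+1,r}$ in $(\star)$: on the slice the recurrence is not a pure ladder in the first and third indices. The resolution exploits two features built into $(\star)$ at the self‑adjoint weight. First, the error coefficient $p-r$ is antisymmetric under $p \leftrightarrow r$, so it disappears once one passes to the antisymmetrized quantity $D_j$ and invokes the level‑$(q+1)$ symmetry. Second, the coefficients $\lambda_j$ on the two sides of $(\diamond)$ are arranged so that the telescoping product is exactly $1$, which — together with $D_L = -D_0$ — is what pins $D_0$, and hence all of $D$, to zero. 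I expect checking these two cancellations, together with the bookkeeping that $(\star)$ genuinely closes on the slice and that every $\lambda_j$ in play (indices $\le K-1 \le k-1$) is nonzero, to be the only real content; the rest is formal.
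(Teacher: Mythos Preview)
Your argument is correct and follows essentially the same route as the paper: restrict to the slice $\alpha_4=\alpha_5$, run downward induction on $\alpha_2$ using only recurrence~\eqref{eqn:recurrence-3}, kill the $\vec{e}_2$-term with the inductive hypothesis, and then use the resulting ladder to force the antisymmetric quantity $D_j$ (the paper's $B_{\alpha_1,\alpha_3}$) to vanish. The only cosmetic difference is that where the paper invokes \cref{induction-lemma} to say $B$ is symmetric (hence zero), you unfold that lemma's product argument directly in your step $D_L = D_0$.
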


\begin{proof}
    Let $\alpha \in \sI_{k-1}^5$ be such that $\alpha_4=\alpha_5$.
    Applying Equation~\eqref{eqn:recurrence-3} to $\alpha'$ and $\alpha$ yields
    \begin{equation*}
        \begin{split}
            0 & = (n+2k-4\alpha_1-4)A_{\alpha'+\vec{e}_3}-4(\alpha_1-\alpha_3)A_{\alpha'+\vec{e}_2}-(n+2k-4\alpha_3-4)A_{\alpha'+\vec{e}_1} , \\
            0 & = (n+2k-4\alpha_1-4)A_{\alpha+\vec{e}_1}-4(\alpha_1-\alpha_3)A_{\alpha+\vec{e}_2}-(n+2k-4\alpha_3-4)A_{\alpha+\vec{e}_3} .
        \end{split}
    \end{equation*}
    Suppose additionally that $A_{\alpha+\vec{e}_2}=A_{\alpha'+\vec{e}_2}$ for all $\alpha\in\sI^5_{k-1}$. Then
    \begin{equation*}
        (n+2k-4\alpha_1-4)(A_{\alpha+\vec{e}_1}-A_{\alpha'+\vec{e}_3}) = (n+2k-4\alpha_3-4)(A_{\alpha+\vec{e}_3}-A_{\alpha'+\vec{e}_1}) .
    \end{equation*}
    Set $B_{\alpha_1, \alpha_3} := A_{\alpha_1,\alpha_2, \alpha_3, \alpha_4, \alpha_4}-A_{\alpha_3, \alpha_2, \alpha_1, \alpha_4, \alpha_4}$.
    It follows from Lemma \ref{induction-lemma} and the above display that $B$ is symmetric.
    Since $B$ is manifestly antisymmetric, it must vanish.
    The final conclusion follows by induction.
\end{proof}

\begin{proof}[Proof of \cref{index-permutation}]
    Applying \cref{induction-lemma} to Equations~\eqref{eqn:recurrence-1} and~\eqref{eqn:recurrence-2} yields Equations~\eqref{eqn:permutation1} and~\eqref{eqn:permutation2}, respectively.
    
    Equations~\eqref{eqn:recurrence-1} and~\eqref{eqn:recurrence-2} also imply that
    \begin{equation*}
        (n+2k-4\alpha_5-4)(A_{\alpha+\vec{e}_5}-A_{\alpha'+\vec{e}_4}) = (n+2k-4\alpha_1-4)(A_{\alpha+\vec{e}_1}-A_{\alpha'+\vec{e}_3}) .
    \end{equation*}
    Thus, $A_{\alpha+\vec{e}_1}=A_{\alpha'+\vec{e}_3}$ if and only if $A_{\alpha+\vec{e}_5}=A_{\alpha'+\vec{e}_4}$. 

    Without loss of generality, let $\alpha_5\geq \alpha_4$.
    Suppose that $A_{\alpha}=A_{\alpha'}$ for all $\alpha$ such that $\alpha_5-\alpha_4=m\in \bN_0$.
    The previous paragraph implies that $A_{\alpha}=A_{\alpha'}$ for all $\alpha$ such that $\alpha_5-\alpha_4=m+1$.
    We conclude from \cref{d-e=0 lemma} and induction that $A_{\alpha}=A_{\alpha'}$.
\end{proof}

We now use \cref{index-permutation} to prove the formal self-adjointness of $\cD'$.

\begin{proof}[Proof of \cref{fsa}]
    By the form of $\cD'$, it suffices to show that
    \begin{align}
        \label{eqn:step1} \int_{\cmG} \cx \cD(\cu \otimes \cv \otimes \cw) \dvol & =\int_{\cmG} \cu \cD(\cx \otimes \cw \otimes \cv) \dvol , \\
        \label{eqn:step2} \int_{\cmG} \cx \cD(\cw \otimes \cu \otimes \cv) \dvol & =\int_{\cmG} \cu \cD(\cv \otimes \cx \otimes \cw) \dvol , \\
        \label{eqn:step3} \int_{\cmG} \cx \cD(\cv \otimes \cw \otimes \cu) \dvol & =\int_{\cmG} \cu \cD(\cw \otimes \cv \otimes \cx ) \dvol ,
    \end{align}
    for all $\cu, \cv, \cw \in C_0^{\infty}(\cmG)$.

    Combining Equation~\eqref{eqn:permutation3} and the formal self-adjointness of $\cDelta$ yield
    \begin{align*}
        \int_{\cmG} \cx \cD(\cu \otimes \cv \otimes\cw) \dvol & = \sum_{\alpha \in \sI_k^5} \int_{\cmG} \binom{k}{\alpha}A_{\alpha} (\cDelta^{\alpha_1}\cx)(\cDelta^{\alpha_5}\cw)\cDelta^{\alpha_2}\bigl((\cDelta^{\alpha_3}\cu)(\cDelta^{\alpha_4}\cv)\bigr) \dvol \\
        & = \sum_{\alpha \in \sI_k^5} \int_{\cmG} \binom{k}{\alpha'}A_{\alpha'} (\cDelta^{\alpha_1}\cu)(\cDelta^{\alpha_5}\cv)\cDelta^{\alpha_2}\bigl( (\cDelta^{\alpha_3}\cx)(\cDelta^{\alpha_4}\cw) \bigr) \dvol \\
        & = \int_{\cmG} \cu \cD( \cx \otimes \cw \otimes \cv ) \dvol
    \end{align*}
    for all $\cu,\cv,\cw \in C_0^\infty(\cmG)$.
    Thus Equation~\eqref{eqn:step1} holds.
    Equation~\eqref{eqn:permutation1} immediately implies that $\cD(\cu \otimes \cv \otimes \cw) = \cD(\cv \otimes \cu \otimes \cw)$ for all $\cu,\cv,\cw \in C^\infty(\cmG)$.
    Combining this with Equation~\eqref{eqn:step1} yields Equation~\eqref{eqn:step2}.
    Finally, Equation~\eqref{eqn:permutation2} implies that
    \begin{equation*}
        \begin{split}
            \int_{\cmG} \cx \cD(\cv \otimes \cw \otimes\cu) \dvol & = \sum_{\alpha\in\sI^5_k}\int_{\cmG} \binom{k}{\alpha} A_{\alpha}  (\cDelta^{\alpha_1}\cx)(\cDelta^{\alpha_5}\cu)\cDelta^{\alpha_2}\bigl( (\cDelta^{\alpha_3}\cv)(\cDelta^{\alpha_4}\cw)\bigr) \dvol \\
            & = \sum_{\alpha\in\sI^5_k}\int_{\cmG} \binom{k}{\alpha} A_{\alpha}  (\cDelta^{\alpha_1}\cu)(\cDelta^{\alpha_5}\cx)\cDelta^{\alpha_2}\bigl( (\cDelta^{\alpha_3}\cv)(\cDelta^{\alpha_4}\cw)\bigr) \dvol \\
            & = \int_{\cmG} \cu \cD(\cv \otimes \cw \otimes \cx ) \dvol
        \end{split}
    \end{equation*}
    for all $\cu,\cv,\cw \in C_0^\infty(\cmG)$.
    Equation~\eqref{eqn:step3} now follows from Equation~\eqref{eqn:permutation1}.
\end{proof}

\begin{remark}
    \label{rk:dimension}
    A straightforward computation verifies that if $(M,g)$ is a Riemannian manifold and $u_1,u_2,u_3 \in C^\infty(M)$, then
    \begin{multline*}
        \Delta(u_1u_2u_2) + u_2u_3\Delta u_1 + u_1u_3\Delta u_2 + u_1u_2\Delta u_3 \\ = u_1\Delta(u_2u_3) + u_2\Delta(u_1u_3) + u_3\Delta(u_1u_2) .
    \end{multline*}
    Note that both sides of this identity define formally self-adjoint operators.
    One can obtain further identities relating formally self-adjoint tridifferential operators of order $2k$ by taking Laplacians of both sides and replacing $u_j$ by suitable Laplace powers thereof.
    This leads to one linear relation among the summands in \cref{fsa}, and hence our guess that the space of formally self-adjoint, conformally covariant, tridifferential operators of order $2k$ on the standard conformal $n$-sphere is $k$-dimensional.
\end{remark}

We conclude by discussing the special cases of \cref{linear-analogue,outer-ovsienko-redou,inner-ovsienko-redou} when the weights satisfy the necessary condition for formal self-adjointness.
Like \cref{fsa}, formal self-adjointness of the symmetrized ambient operator follows immediately from tangentiality.

The formally self-adjoint special case of \cref{linear-analogue} is as follows:

\begin{proposition}
    \label{linear-analogue-fsa}
    Fix integers $n \geq 3$ and $k,\ell_1,\ell_2 \geq 0$ such that $k \geq \ell := \ell_1 + \ell_2$;
    if $n$ is even, then assume additionally that $n \geq 2k$.
    Let $A \in \sF_{k-\ell}^3$ be a solution of the recurrence relation
    \begin{equation*}
        (k - \alpha_3 - 1)A_{\alpha+\vec{e}_3} = (k - \alpha_1-1)A_{\alpha+\vec{e}_1} - ( \ell_1 - \ell_2 + \alpha_1 - \alpha_3 ) A_{\alpha+\vec{e}_2} .
    \end{equation*}
    If $\cI_1,\cI_2$ are scalar Riemannian invariants of weight $-2\ell_1$ and $-2\ell_2$, respectively, on $(n+2)$-manifolds, then
    \begin{equation}
        \label{eqn:linear-analogue-fsa}
        \cD(\cu) := \sum_{\alpha\in\sI_{k-\ell}^3} \binom{k-\ell}{\alpha}A_\alpha \Bigl( \cDelta^{\alpha_1}\bigl( \cI_1\cDelta^{\alpha_2}( \cI_2\cDelta^{\alpha_3}\cu) + \cDelta^{\alpha_3}\bigl( \cI_2\cDelta^{\alpha_2}( \cI_1\cDelta^{\alpha_1}\cu ) \bigr) \Bigr)
    \end{equation}
    is a tangential formally self-adjoint ambient differential operator on $\cmE\bigl[-\frac{n-2k}{2}\bigr]$.
\end{proposition}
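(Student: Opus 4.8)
The plan is to write $\cD = \cD_0 + \cD_0^\ast$, where $\cD_0$ is the operator of \cref{linear-analogue} and $\cD_0^\ast$ is its formal adjoint in the ambient space, and to show that each summand is tangential on $\cmE\bigl[-\tfrac{n-2k}{2}\bigr]$. First I would set
\begin{equation*}
 \cD_0(\cu) := \sum_{\alpha\in\sI_{k-\ell}^3} \binom{k-\ell}{\alpha}A_\alpha\,\cDelta^{\alpha_1}\bigl( \cI_1\cDelta^{\alpha_2}(\cI_2\cDelta^{\alpha_3}\cu) \bigr) ,
\end{equation*}
so that the remaining summands in~\eqref{eqn:linear-analogue-fsa} comprise $\cD_1(\cu) := \sum_{\alpha} \binom{k-\ell}{\alpha}A_\alpha\,\cDelta^{\alpha_3}(\cI_2\cDelta^{\alpha_2}(\cI_1\cDelta^{\alpha_1}\cu))$. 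Because $\cDelta$ and multiplication by any scalar Riemannian invariant are formally self-adjoint in the ambient space, repeated integration by parts gives $\int_{\cmG} \cv\,\cD_0(\cu)\,\dvol = \int_{\cmG} \cu\,\cD_1(\cv)\,\dvol$ for all $\cu,\cv\in C_0^\infty(\cmG)$; that is, $\cD_1 = \cD_0^\ast$. Thus $\cD = \cD_0 + \cD_0^\ast$ is manifestly formally self-adjoint in the ambient space, and it remains only to prove that $\cD$ is tangential on $\cmE\bigl[-\tfrac{n-2k}{2}\bigr]$.

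For $\cD_0$ this will follow from \cref{linear-analogue} once I check that the recurrence relation in the statement is precisely~\eqref{eqn:linear-analogue-recurrence} specialized to $w = -\tfrac{n-2k}{2}$. Indeed, then $\tfrac{n}{2}+w = k$, so the first two coefficients of~\eqref{eqn:linear-analogue-recurrence} become $k-\alpha_3-1$ and $-(k-\alpha_1-1)$; and since $\alpha\in\sI_{k-\ell-1}^3$ forces $\alpha_2 = k-\ell-1-\alpha_1-\alpha_3$, the third coefficient becomes $\tfrac{n}{2}+w-2\ell_2-\alpha_2-2\alpha_3-1 = \ell_1-\ell_2+\alpha_1-\alpha_3$. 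Hence the hypothesis on $A$ says exactly that $A$ solves~\eqref{eqn:linear-analogue-recurrence} at this weight, so $\cD_0$ is tangential on $\cmE\bigl[-\tfrac{n-2k}{2}\bigr]$.

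For $\cD_1$ I would reindex by $\beta := (\alpha_3,\alpha_2,\alpha_1)$ and set $\widetilde{A}_\beta := A_{\beta_3,\beta_2,\beta_1}$; since $\binom{k-\ell}{\alpha}$ is symmetric in its entries, $\cD_1(\cu) = \sum_\beta \binom{k-\ell}{\beta}\widetilde{A}_\beta\,\cDelta^{\beta_1}(\cI_2\cDelta^{\beta_2}(\cI_1\cDelta^{\beta_3}\cu))$, which is the operator of \cref{linear-analogue} with the data $(\cI_1,\ell_1)$ and $(\cI_2,\ell_2)$ interchanged and coefficient function $\widetilde{A}$. So $\cD_1$ is tangential on $\cmE\bigl[-\tfrac{n-2k}{2}\bigr]$ provided $\widetilde{A}$ solves~\eqref{eqn:linear-analogue-recurrence} with $\ell_2$ replaced by $\ell_1$ and $w = -\tfrac{n-2k}{2}$; expressing $\widetilde{A}_{\beta+\vec{e}_j}$ in terms of $A$ and again using $\tfrac{n}{2}+w=k$ and the constraint on $\beta$, one finds that this recurrence is exactly $(-1)$ times the one in the statement, hence holds by hypothesis. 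Since tangentiality is preserved under sums, $\cD = \cD_0 + \cD_1$ is tangential, which completes the proof.

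I expect the one substantive point to be the last step: the identity $\tfrac{n}{2}+w=k$ is exactly what makes all three coefficients of~\eqref{eqn:linear-analogue-recurrence} change sign under the involution $\alpha\mapsto(\alpha_3,\alpha_2,\alpha_1)$ together with $\ell_1\leftrightarrow\ell_2$, and it is this cancellation that forces the ambient adjoint $\cD_0^\ast$ to again be of the tangential form covered by \cref{linear-analogue}. Away from the critical weight there is no reason for $\cD_0^\ast$ to be tangential, so this is the crux of the formal self-adjointness; it plays the role here that \cref{index-permutation} plays for \cref{fsa}, but is far more elementary.
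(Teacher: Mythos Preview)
Your argument is correct and follows essentially the same route as the paper: invoke \cref{linear-analogue} at the weight $w=-\tfrac{n-2k}{2}$ for tangentiality, and use integration by parts (formal self-adjointness of $\cDelta$ and of multiplication operators) to see that the Dirichlet form is symmetric. The paper integrates by parts ``halfway'' to a manifestly symmetric integrand, while you integrate ``all the way'' to identify the second summand as $\cD_0^\ast$; these are equivalent. Your treatment is in fact more explicit on one point the paper leaves implicit, namely why the second summand $\cD_1$ is itself tangential (via the reindexing $\beta=(\alpha_3,\alpha_2,\alpha_1)$ and the swap $\ell_1\leftrightarrow\ell_2$, which at the critical weight turns the recurrence into its own negative).
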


\begin{proof}
    Define $\cD$ by Equation~\eqref{eqn:linear-analogue-fsa}.
    \Cref{linear-analogue} implies that $\cD$ is tangential on $\cmE\bigl[-\frac{n-2k}{2}\bigr]$.
    The Divergence Theorem implies that
    \begin{multline*}
        \int_{\cmG} \cu_1\cD(\cu_2) \dvol_{\cg} = \sum_{\alpha\in\sI_{k-\ell}^3} \int_{\cmG} \binom{k-\ell}{\alpha}A_\alpha \Bigl( \bigl(\cI_1\cDelta^{\alpha_1}\cu_1\bigr) \bigl( \cDelta^{\alpha_2}( \cI_2\cDelta^{\alpha_3}\cu_2)\bigr) \\ + \bigl( \cI_1\cDelta^{\alpha_1}\cu_2 \bigr) \bigl( \cDelta^{\alpha_2}( \cI_2\cDelta^{\alpha_3}\cu_1 ) \bigr) \Bigr) \dvol_{\cg}
    \end{multline*}
    for all $\cu_1,\cu_2 \in C_0^\infty(\cmG)$.
    Hence $\cD$ is formally self-adjoint.
\end{proof}

The formally self-adjoint case of \cref{outer-ovsienko-redou} is as follows:

\begin{proposition}
    \label{outer-ovsienko-redou-fsa}
    Fix integers $k \geq \ell \geq 0$ and $n > 2k$.
    Let $A \in \sF_{k-\ell}^3$ be a solution of the recurrence relations
    \begin{align*}
        \left( \frac{n+4k}{6} - \alpha_1 - 1 \right)A_{\alpha+\vec{e}_1} & = \left(\frac{n+4k}{6} - \alpha_2 - 1 \right)A_{\alpha+\vec{e}_2} , \\
        \left( \frac{n+4k}{6} - \alpha_1 - 1 \right)A_{\alpha+\vec{e}_1} & = \left( \frac{n+4k}{6} - \alpha_3 - 1 \right)A_{\alpha+\vec{e}_3} .
    \end{align*}
    If $\cI$ is a scalar Riemannian invariant of weight $-2\ell$ on $(n+2)$-manifolds, then
    \begin{equation}
        \label{eqn:outer-ovsienko-redou-fsa}
        \cD(\cu \otimes \cv) := \sum_{\alpha \in \sI_{k-\ell}^3} \binom{k-\ell}{\alpha}A_{\alpha}\cDelta^{\alpha_1}\left( \cI \bigl( \cDelta^{\alpha_2}\cu \bigr) \bigl( \cDelta^{\alpha_3}\cv \bigr) \right)
    \end{equation}
    is a tangential formally self-adjoint ambient bidifferential operator on $\cmE\bigl[-\frac{n-2k}{3}\bigr]^{\otimes 2}$.
\end{proposition}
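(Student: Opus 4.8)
The plan is to obtain tangentiality for free from \cref{outer-ovsienko-redou} and to prove formal self-adjointness by showing that, at the self-adjoint weight, the recurrence relations force $A$ to be totally symmetric in its three indices. First I would set $w_1 = w_2 = -\frac{n-2k}{3}$, so $w := w_1 + w_2 = -\frac{2(n-2k)}{3}$, and observe via the short computation $\frac{n}{2} + w_1 = \frac{n+4k}{6}$ and $\frac{n}{2} + w - 2k = -\frac{n+4k}{6}$ that the recurrence relations in the statement are exactly the recurrence relations~\eqref{eqn:outer-ovsienko-redou-recurrence} for these weights. Then \cref{outer-ovsienko-redou} immediately gives that $\cD$, defined by~\eqref{eqn:outer-ovsienko-redou-fsa}, is a tangential ambient bidifferential operator on $\cmE\bigl[-\frac{n-2k}{3}\bigr]^{\otimes 2}$.

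Next I would exploit that the two recurrence relations say precisely that $\bigl(\frac{n+4k}{6} - \alpha_j - 1\bigr)A_{\alpha+\vec{e}_j}$ is independent of $j \in \{1,2,3\}$. Setting $f(m) := \frac{n+4k}{6} - m$, the hypotheses $n > 2k$ and $0 \le \ell \le k$ give $f(m) \ge f(k-\ell) = \frac{n-2k+6\ell}{6} > 0$ for all $0 \le m \le k-\ell$, so $f$ is nonzero on every index that actually occurs. Holding $\alpha_3$ fixed and applying \cref{induction-lemma} to the first recurrence, viewed as a relation for the two-index array $(\alpha_1,\alpha_2) \mapsto A_{\alpha_1,\alpha_2,\alpha_3}$, yields $A_{\alpha_1,\alpha_2,\alpha_3} = A_{\alpha_2,\alpha_1,\alpha_3}$; holding $\alpha_2$ fixed and applying \cref{induction-lemma} to the second recurrence yields $A_{\alpha_1,\alpha_2,\alpha_3} = A_{\alpha_3,\alpha_2,\alpha_1}$. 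Since the transpositions $(1\,2)$ and $(1\,3)$ generate $S_3$, this shows $A_\alpha$ depends only on the multiset $\{\alpha_1,\alpha_2,\alpha_3\}$.

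Finally I would compute the Dirichlet form. For $\cu_1,\cu_2,\cu_3 \in C_0^\infty(\cmG)$ the function $\cI(\cDelta^{\alpha_2}\cu_2)(\cDelta^{\alpha_3}\cu_3)$ is compactly supported, so repeated use of the Divergence Theorem (formal self-adjointness of $\cDelta$ on $\cmG$) gives
\[
    \int_{\cmG} \cu_1 \cD(\cu_2 \otimes \cu_3) \dvol_{\cg} = \sum_{\alpha \in \sI_{k-\ell}^3} \binom{k-\ell}{\alpha} A_\alpha \int_{\cmG} \cI\, (\cDelta^{\alpha_1}\cu_1)(\cDelta^{\alpha_2}\cu_2)(\cDelta^{\alpha_3}\cu_3) \dvol_{\cg}.
\]
Because both $\binom{k-\ell}{\alpha}$ and, by the previous paragraph, $A_\alpha$ are symmetric under permutations of $(\alpha_1,\alpha_2,\alpha_3)$, the right-hand side is symmetric under every permutation of $(\cu_1,\cu_2,\cu_3)$; in particular under the transposition $\cu_1 \leftrightarrow \cu_2$, which is exactly the formal self-adjointness of $\cD$. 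I do not expect a serious obstacle here: the only delicate point is the nonvanishing of $f$ on the indices that occur, which is where the strict inequality $n > 2k$ (as opposed to $n \ge 2k$) is used—compare the parenthetical remark at the start of \cref{sec:fsa}—together with the routine bookkeeping needed to apply \cref{induction-lemma} slicewise with one index held fixed.
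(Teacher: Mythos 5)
Your proposal is correct and follows essentially the same route as the paper's proof: tangentiality is quoted from \cref{outer-ovsienko-redou} after checking that the stated recurrences are~\eqref{eqn:outer-ovsienko-redou-recurrence} at $w_1=w_2=-\frac{n-2k}{3}$, the Divergence Theorem reduces the Dirichlet form to $\sum_\alpha \binom{k-\ell}{\alpha}A_\alpha\int_{\cmG}\cI\,(\cDelta^{\alpha_1}\cu_1)(\cDelta^{\alpha_2}\cu_2)(\cDelta^{\alpha_3}\cu_3)\dvol_{\cg}$, and \cref{induction-lemma} gives full symmetry of $A$. You merely make explicit some details the paper leaves implicit (the weight computation, the slicewise application of \cref{induction-lemma} with one index fixed, and the nonvanishing of $\frac{n+4k}{6}-m$ on the relevant range, which is where $n>2k$ enters; cf.\ \cref{rk:why-strict-inequality}).
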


\begin{remark}
    \label{rk:why-strict-inequality}
    The assumption $n > 2k$ guarantees that $0$ is not a coefficient in the assumed recurrence relations, allowing us to apply \cref{induction-lemma} to deduce symmetry.
    A similar argument works in the case $n=2k$, where $\cD$ is still defined, provided one also assumes that $A_{k\vec{e}_1}=A_{k\vec{e}_2}=A_{k\vec{e}_3}$.
\end{remark}

\begin{proof}
    Define $\cD$ by Equation~\eqref{eqn:outer-ovsienko-redou-fsa}.
    \Cref{outer-ovsienko-redou} implies that $\cD$ is tangential on $\cmE\bigl[-\frac{n-2k}{3}\bigr]^{\otimes2}$.
    The Divergence Theorem implies that
    \begin{equation*}
        \int_{\cmG} \cu_1\cD(\cu_2 \otimes \cu_3) \dvol_{\cg} = \sum_{\alpha \in \sI_{k-\ell}^3} \int_{\cmG} \binom{k-\ell}{\alpha} A_\alpha \cI \bigl( \cDelta^{\alpha_1}\cu_1 \bigr) \bigl( \cDelta^{\alpha_2}\cu_2 \bigr) \bigl( \cDelta^{\alpha_3}\cu_3 \bigr) \dvol_{\cg}
    \end{equation*}
    for all $\cu_1,\cu_2,\cu_3 \in C_0^\infty(\cmG)$.
    \Cref{induction-lemma} and our recurrence relations imply that $A$ is symmetric, and hence $\cD$ is formally self-adjoint.
\end{proof}

The formally self-adjoint case of \cref{inner-ovsienko-redou} is as follows:

\begin{proposition}
    \label{inner-ovsienko-redou-fsa}
    Fix integers $k \geq \ell \geq 0$ and $n > 2k$.
    Let $A \in \sF_{k-\ell}^4$ be a solution of the recurrence relations
    \begin{align*}
        \MoveEqLeft[10] \left( \frac{n+4k}{6} - \alpha_3 - 1 \right)A_{\alpha+\vec{e}_3} = \left( \frac{n+4k}{6} - \alpha_4 - 1 \right)A_{\alpha+\vec{e}_4} , \\
        \MoveEqLeft[10] \left( \frac{n+4k}{6} - \alpha_3 - 1 \right)A_{\alpha+\vec{e}_3} = \left( \frac{n+4k}{6} - \alpha_1 - 1 \right)A_{\alpha+\vec{e}_1} \\
        & + \left(\frac{n+4k}{6}-2\ell-2\alpha_1-\alpha_2-1 \right)A_{\alpha+\vec{e}_2} .
    \end{align*}
    If $\cI$ is a scalar Riemannian invariant of weight $-2\ell$ on $(n+2)$-manifolds, then
    \begin{multline}
        \label{eqn:inner-ovsienko-redou-fsa}
        \cD( \cu \otimes \cv ) := \sum_{\alpha \in \sI_{k-\ell}^4} \binom{k-\ell}{\alpha} A_{\alpha} \biggl[ \cDelta^{\alpha_1} \left( \cI \cDelta^{\alpha_2} \bigl( (\cDelta^{\alpha_3}\cu) (\cDelta^{\alpha_4}\cv) \bigr) \right) \\
            + \cDelta^{\alpha_3} \left( \bigl(\cDelta^{\alpha_4}\cu \bigr) \bigl( \cDelta^{\alpha_2} ( \cI \cDelta^{\alpha_1}\cv ) \bigr) + \bigl(\cDelta^{\alpha_4}\cv \bigr) \bigl( \cDelta^{\alpha_2} ( \cI \cDelta^{\alpha_1}\cu ) \bigr) \right) \biggr]
    \end{multline}
    is a tangential formally self-adjoint ambient bidifferential operator on $\cmE\bigl[ -\frac{n-2k}{3}\bigr]^{\otimes 2}$.
\end{proposition}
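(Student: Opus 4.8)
The plan is to mirror the proofs of \cref{linear-analogue-fsa} and \cref{outer-ovsienko-redou-fsa}: first establish tangentiality of $\cD$ by reducing each of its three summands to \cref{inner-ovsienko-redou} or \cref{inner-ovsienko-redou2}, and then deduce formal self-adjointness from an integration-by-parts identity for the Dirichlet form, using a symmetry of the coefficient array $A$ supplied by \cref{induction-lemma}.

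For tangentiality, specialize to $w_1 = w_2 = -\frac{n-2k}{3}$, so that $\frac{n}{2} + w_1 = \frac{n}{2} + w_2 = \frac{n+4k}{6}$ and $\frac{n}{2} + w_1 + w_2 - 2k = -\frac{n+4k}{6}$. Using $\alpha_1 + \alpha_2 + \alpha_3 + \alpha_4 = k-\ell-1$ for $\alpha \in \sI_{k-\ell-1}^4$, one checks that the recurrence relations assumed in the statement are equivalent to the recurrence relations~\eqref{eqn:inner-ovsienko-redou-recurrence2}; hence \cref{inner-ovsienko-redou2} shows that the first summand $\sum_\alpha \binom{k-\ell}{\alpha} A_\alpha \cDelta^{\alpha_1}\bigl(\cI\cDelta^{\alpha_2}((\cDelta^{\alpha_3}\cu)(\cDelta^{\alpha_4}\cv))\bigr)$ is tangential on $\cmE[-\frac{n-2k}{3}]^{\otimes 2}$. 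For the second summand $\sum_\alpha \binom{k-\ell}{\alpha} A_\alpha \cDelta^{\alpha_3}\bigl((\cDelta^{\alpha_4}\cu)(\cDelta^{\alpha_2}(\cI\cDelta^{\alpha_1}\cv))\bigr)$, reindex by $\beta := (\alpha_3,\alpha_4,\alpha_2,\alpha_1)$ and set $B_\beta := A_{\beta_4,\beta_3,\beta_1,\beta_2}$; one checks that $B$ solves the recurrence relations~\eqref{eqn:inner-ovsienko-redou-recurrence} with $w_1 = w_2 = -\frac{n-2k}{3}$, so that \cref{inner-ovsienko-redou} makes this summand tangential as well, and the third summand, being the image of the second under $\cu \leftrightarrow \cv$, is tangential since $w_1 = w_2$. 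A sum of tangential operators is tangential.

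For formal self-adjointness, note first that the first recurrence relation in the statement reads $f(\alpha_3+1)A_{\alpha+\vec{e}_3} = f(\alpha_4+1)A_{\alpha+\vec{e}_4}$ with $f(j) := \frac{n+4k}{6} - j$, and that $f$ does not vanish at any relevant integer since $n > 2k$ forces $\frac{n+4k}{6} > k \geq k-\ell$; by \cref{induction-lemma}, $A_{\alpha_1,\alpha_2,\alpha_3,\alpha_4} = A_{\alpha_1,\alpha_2,\alpha_4,\alpha_3}$ for all $\alpha \in \sI_{k-\ell}^4$ (this is the only place the strict inequality $n>2k$ enters; cf.\ \cref{rk:why-strict-inequality}). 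Next, integrating the two outermost Laplacians of each summand of $\cD$ by parts via the Divergence Theorem gives
\[
\int_{\cmG} \cu_1 \cD(\cu_2 \otimes \cu_3)\dvol_{\cg} = \sum_{\alpha \in \sI_{k-\ell}^4} \binom{k-\ell}{\alpha} A_\alpha \bigl( Q_\alpha(\cu_1,\cu_2,\cu_3) + Q_\alpha(\cu_3,\cu_1,\cu_2) + Q_\alpha(\cu_2,\cu_1,\cu_3) \bigr)
\]
for all $\cu_1, \cu_2, \cu_3 \in C_0^\infty(\cmG)$, where $Q_\alpha(a,b,c) := \int_{\cmG} \bigl(\cDelta^{\alpha_2}(\cI\cDelta^{\alpha_1}a)\bigr)(\cDelta^{\alpha_3}b)(\cDelta^{\alpha_4}c)\dvol_{\cg}$. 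Since $A$ and the multinomial coefficient are symmetric under $\alpha_3 \leftrightarrow \alpha_4$, the functional $(a,b,c) \mapsto \sum_\alpha \binom{k-\ell}{\alpha} A_\alpha Q_\alpha(a,b,c)$ is symmetric in $b \leftrightarrow c$; since the three summands on the right-hand side place $\cu_1$, $\cu_3$, and $\cu_2$ respectively into the first argument, the right-hand side is invariant under every permutation of $\cu_1,\cu_2,\cu_3$. Hence the Dirichlet form $\kD$ is symmetric and $\cD$ is formally self-adjoint.

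The main obstacle is the bookkeeping in the tangentiality step. Checking that the specialized recurrence relations are equivalent to~\eqref{eqn:inner-ovsienko-redou-recurrence2} and that the reindexed array $B$ satisfies~\eqref{eqn:inner-ovsienko-redou-recurrence} is routine but requires careful tracking of the weight specialization, the permutation $\beta = (\alpha_3,\alpha_4,\alpha_2,\alpha_1)$, and the constraint $|\alpha| = k-\ell-1$ on $\sI_{k-\ell-1}^4$; a transposition or sign error here is easy to make. By contrast, once $Q_\alpha$ is identified correctly, the integration-by-parts computation and the symmetry argument are short.
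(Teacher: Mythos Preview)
Your proposal is correct and follows essentially the same approach as the paper: tangentiality from \cref{inner-ovsienko-redou,inner-ovsienko-redou2}, the $\alpha_3\leftrightarrow\alpha_4$ symmetry of $A$ from the first recurrence via \cref{induction-lemma}, and formal self-adjointness from an integration-by-parts identity for the Dirichlet form. The paper records the Dirichlet form in the equivalent shape $\sum_\alpha\binom{k-\ell}{\alpha}A_\alpha\int(\cI\cDelta^{\alpha_1}\cu_i)\,\cDelta^{\alpha_2}\bigl((\cDelta^{\alpha_3}\cu_j)(\cDelta^{\alpha_4}\cu_m)\bigr)$ summed over the three placements of $\cu_1,\cu_2,\cu_3$, and leaves the reindexing you spell out for the second and third summands implicit.
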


\begin{remark}
    As in \cref{rk:why-strict-inequality}, the assumption $n>2k$ is made in order to apply \cref{induction-lemma}.
    The result is also true when $n=2k$ provided one also assumes that $A_{k\vec{e}_3}=A_{k\vec{e}_4}$.
\end{remark}

\begin{proof}
    Define $\cD$ by Equation~\eqref{eqn:inner-ovsienko-redou-fsa}.
    \Cref{inner-ovsienko-redou,inner-ovsienko-redou2} imply that $\cD$ is tangential on $\cmE\bigl[-\frac{n-2k}{3}\bigr]^{\otimes2}$.
    The Divergence Theorem implies that
    \begin{align*}
        \MoveEqLeft[15] \int_{\cmG} \cu_1\cD(\cu_2\otimes\cu_3) \dvol_{\cg} = \sum_{\alpha\in\sI_{k-\ell}^4} \int_{\cmG} \binom{k-\ell}{\alpha}A_\alpha \Bigl( \bigl( \cI\cDelta^{\alpha_1}\cu_1 \bigr) \cDelta^{\alpha_2}\bigl( ( \cDelta^{\alpha_3}\cu_2 ) ( \cDelta^{\alpha_4}\cu_3 ) \bigr) \\
        & + \bigl( \cI\cDelta^{\alpha_1}\cu_2 \bigr) \cDelta^{\alpha_2}\bigl( (\cDelta^{\alpha_3}\cu_1) (\cDelta^{\alpha_4}\cu_3) \bigr) \\
        & + \bigl( \cI\cDelta^{\alpha_1}\cu_3\bigr) \cDelta^{\alpha_2}\bigl( ( \cDelta^{\alpha_3}\cu_1) (\cDelta^{\alpha_4}\cu_2) \bigr) \Bigr) \dvol_{\cg}
    \end{align*}
    for all $\cu_1,\cu_2,\cu_3 \in C_0^\infty(\cmG)$.
    Our first recurrence relation and \cref{induction-lemma} imply that $A$ is symmetric in its last two components.
    Hence $\cD$ is formally self-adjoint.
\end{proof}

\section*{Acknowledgements}
JSC was partially supported by a Simons Foundation Collaboration Grant for Mathematicians and by the National Science Foundation under Award No.\ DMS-2505606.
OC was partially supported through the WIM Scholar program at Penn State University.

\bibliography{bib}

\end{document}